\newcommand{\alp}{\alpha}
\newcommand{\eps}{\varepsilon}
\newcommand{\Dcal}{\mathcal D}
\newcommand{\Zcal}{\mathcal Z}
\newcommand{\CI}{{\mathcal C}^\infty }
\newcommand{\D}{{\mathcal D}}
\newcommand{\ZZ}{{\mathbb Z}}
\newcommand{\RR}{{\mathbb R}}
\newcommand{\NN}{{\mathbb N}}
\newcommand{\sinL}{\sin\left(k\pi \frac{y}{L(x)}\right)}
\newcommand{\cosL}{\cos\left(k\pi \frac{y}{L(x)}\right)}
\theoremstyle{plain}
\newtheorem{thm}{Theorem}
\newtheorem{prop}{Proposition}[section]
\newtheorem{cor}[prop]{Corollary}
\newtheorem{lem}[prop]{Lemma}
\theoremstyle{definition}
\newtheorem{rem}{Remark}[section]
\numberwithin{equation}{section}
\def\squarebox#1{\hbox to #1{\hfill\vbox to #1{\vfill}}}
\newcommand{\p}{\partial}
\newcommand{\und}{\frac{1}{2}}
\newcommand{\refeq}[1]{(\ref{#1})}
\title
[Partially Rectangular Billiards] 
{Nonconcentration in partially rectangular billiards}
\author[L. Hillairet]
{Luc Hillairet}
\email{Luc.Hillairet@math.univ-nantes.fr}
\address{UMR CNRS 6629-Universit\'{e} de Nantes, 2 rue de la Houssini\`{e}re, \\
BP 92 208, F-44 322 Nantes Cedex 3, France}
\author[J.L. Marzuola]
{Jeremy L. Marzuola}
\email{marzuola@math.unc.edu}
\address{Mathematics Department, University of North Carolina, Chapel Hill \\
Phillips Hall, Chapel Hill, NC 27798, USA}
\begin{document}    

\begin{abstract}
In specific types of partially rectangular billiards we estimate 
the mass of an eigenfunction of energy $E$ in the region outside the rectangular set 
in the high-energy limit. We use the adiabatic ansatz to compare 
the Dirichlet energy form with a second quadratic form for which separation of variables 
applies. This allows us to use sharp one-dimensional control estimates and to derive 
the bound assuming that $E$ is not resonating with the Dirichlet spectrum of 
the rectangular part.  
\end{abstract}
   
\maketitle 

\section{Introduction}
We study concentration and non-concentration of eigenfunctions of the Laplace 
operator in stadium-like billiards. As predicted by the quantum/classical correspondence, 
such concentration is deeply linked with the classical underlying dynamics. In particular, the 
celebrated quantum ergodicity theorem roughly states that when the corresponding 
classical dynamics is ergodic then almost every sequence of eigenfunctions equidistributes 
in the high energy limit (see \cite{Shn, CdV, Zel} and \cite{GerLei, ZZ} in the billiard 
setting for a more precise statement). In strongly chaotic systems such as negatively curved manifolds, 
it is expected that every sequence of eigenfunctions equidistributes. This statement is 
the Quantum Unique Ergodicity conjecture (Q.U.E.) and remains open in most cases despite several 
recent striking results (see for instance \cite{BFN, Lin, Ana, Ana-Non}). 
On the other extreme, the Bunimovich stadium, although ergodic, is expected to 
violate Q.U.E.. Indeed, it is expected that there exist bouncing ball modes i.e. 
exceptional sequences of eigenfunctions concentrating on the cylinder of bouncing 
ball periodic orbits that sweep out the rectangular region (see \cite{BSS} for instance). 
The existence of such bouncing ball modes is still open and only recently 
did Hassell prove that the generic Bunimovich stadium billiard indeed fails 
to be Q.U.E. (see \cite{Hass}).       

Our work is closely related to the search for bouncing ball modes but 
proceeds loosely speaking in the other direction. We actually aim at 
understanding how strong concentration of eigenfunctions 
in the rectangular part cannot be. We thus follow \cite{BZ3} 
in which Burq-Zworski proved that even bouncing ball modes couldn't concentrate 
strictly inside the rectangular region. This was made precise by 
Burq-Hassell-Wunsch in \cite{BHW} where the following estimate was proved :  
\[ 
\|u\|_{L^2(W)} \geq E^{-1}\|u\|_{L^2(\Omega)}
\]
in which $\|u\|_{L^2(W)}$ (resp. $\|u\|_{L^2(\Omega)}$) denotes the $L^2$ norm of the eigenfunction $u$ in the 
wings (resp. in the billiard).

Our main result for the Bunimovich stadium is the following theorem 

\begin{thm}\label{thm:intro}
Let $\Omega$ be a Bunimovich stadium with rectangular part 
$R:= [-B_0,0]\times [0,L_0].$ We set $W=\Omega\setminus R$ and 
denote by $\Sigma$ the Dirichlet spectrum of $R,$ i.e. 
\[
\Sigma\,=\,\left\{ \frac{k^2\pi^2}{L_0^2}\,+\, \frac{l^2\pi^2}{B_0^2},~k,l\in \NN \right \}.
\] 
For any $\eps\geq 0$ there exists $E_0$ and $C$ such that 
if $u$ is an eigenfunction of energy $E$ such that $E>E_0$ and 
$\mathrm{dist}(E, \Sigma)> E^{-\eps}$ then the following estimates holds:
\[ 
\|u\|_{L^2(\Omega)}\,\leq C E^{\frac{5+8\eps}{6}} \| u\|_{L^2(W)},
\]
\end{thm}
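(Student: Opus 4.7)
The plan is to exploit the product structure of the rectangular part by expanding the eigenfunction in an adiabatic Dirichlet basis in the $y$-variable, reducing matters to a family of one-dimensional Helmholtz problems, and then combining a sharp 1D control estimate on each mode with the non-resonance hypothesis $\mathrm{dist}(E,\Sigma)>E^{-\eps}$.

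\textbf{Mode decomposition and a modified quadratic form.} Let $L(x)$ denote the local height of $\Omega$ at abscissa $x$, so that $L\equiv L_0$ on $R=[-B_0,0]\times[0,L_0]$ and $L$ varies on the wings. Set
\[
\varphi_k(x,y)=\sqrt{\tfrac{2}{L(x)}}\,\sin\!\left(\tfrac{k\pi y}{L(x)}\right),
\]
and write $u(x,y)=\sum_{k\ge 1}u_k(x)\varphi_k(x,y)$. The Dirichlet form $Q(u)=\int_\Omega|\nabla u|^2$ does not decouple in this basis because of the $x$-dependence of $L$, but the modified form
\[
\tilde Q(u)=\sum_{k\ge 1}\int\Bigl(|u_k'(x)|^2+\tfrac{k^2\pi^2}{L(x)^2}|u_k(x)|^2\Bigr)\,dx
\]
does. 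My first task would be to prove an adiabatic comparison estimate bounding $|Q(u)-\tilde Q(u)|$ in terms of $L'$ and mixed Sobolev norms of the $u_k$. On $R$ the two forms agree exactly and the modes are fully decoupled, so the coupling error is supported on the wings.

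\textbf{Reduction to 1D problems.} Testing $-\Delta u=Eu$ against $\varphi_k$ and using the comparison above, each mode $u_k$ satisfies, up to a controlled remainder, a 1D Helmholtz equation $-u_k''=\mu_k(x)u_k$ with spectral parameter $\mu_k(x)=E-k^2\pi^2/L(x)^2$. On $R$ this parameter is the constant $\mu_k=E-k^2\pi^2/L_0^2$, and the Dirichlet $x$-spectrum of the interval $[-B_0,0]$ is exactly $\{l^2\pi^2/B_0^2\}$; resonance of $u_k$ on $R$ would correspond to $E\in\Sigma$. I would split the modes into three regimes: well-propagating ($\mu_k\gg E^{-\eps}$), evanescent ($-\mu_k\gg E^{-\eps}$), and threshold. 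Evanescent modes decay exponentially across $R$, so their $L^2$ mass on $R$ is automatically dominated by their mass at $\partial R\cap\overline W$. For the propagating modes I would invoke a sharp 1D control/observability estimate: for $-v''=\mu v$ on $[-B_0,0]$ with $|\mu-l^2\pi^2/B_0^2|\ge\delta$ for every $l$, one has $\|v\|_{L^2([-B_0,0])}\le C\delta^{-1/2}\|v\|_{L^2(J)}$ on any fixed neighborhood $J$ of an endpoint. The non-resonance hypothesis supplies $\delta\gtrsim E^{-\eps}$ uniformly in $k$, so each mode contributes a control constant of order $E^{\eps/2}$ times polynomial factors in $E$.

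\textbf{Summation and the threshold regime.} Reassembling $\|u\|^2_{L^2(\Omega)}=\sum_k\|u_k\|^2_{L^2}$ and applying the mode-by-mode estimates together with the a priori bound $\|u\|_{H^1}\lesssim\sqrt E\,\|u\|_{L^2}$ to absorb the adiabatic remainder, one gets a bound of the announced form. The step I expect to be the main obstacle is handling the threshold band $k^2\pi^2/L_0^2\approx E$: there the Helmholtz frequency $\mu_k$ is small, the 1D control constant degenerates precisely where the non-resonance buys nothing extra, and the modes still carry a potentially large share of the $L^2$ mass. One must carefully balance the number of threshold modes against the $E^\eps$-loss from the resonance condition and the loss coming from the adiabatic comparison; optimizing this tradeoff is presumably what produces the exponent $(5+8\eps)/6$.
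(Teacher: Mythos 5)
Your overall skeleton (adiabatic ansatz $u=\sum u_k(x)\sin(k\pi y/L(x))$, comparison of the Dirichlet form with the decoupled form, 1D control estimates mode by mode, non-resonance to handle the resonant prefactor) is the same as the paper's. But there is a genuine gap in the mechanism, and it is precisely the point that produces the exponent. You propose to control each mode from a \emph{fixed} neighborhood $J$ of the endpoint and to absorb the adiabatic remainder using $\|u\|_{H^1}\lesssim E^{1/2}\|u\|_{L^2}$. This cannot work as stated: away from $x=0$ the slope $L'$ is of size one, so on a fixed window the discrepancy $|Q(u)-\tilde Q(u)|$ is of order $E\|u\|_{L^2(\Omega)}^2$ and cannot be absorbed by any choice of constants. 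The paper's essential device is to compare the two forms only on $\Omega_b=\Omega\cap\{x\le b\}$ with a \emph{shrinking} window $b=ME^{-\alpha}$, where $|L'|=O(b^{\gamma-1})$, and then to pay for this with 1D control estimates from the short interval $(0,b)$: these cost $b^{-1/2}$, and for the propagating modes an extra factor $|\sin(B_0\sqrt{z_k})|^{-1}\lesssim \sqrt{E}/\nu(E)$, where $\nu(E)=\mathrm{dist}(E,\Sigma)$. The final bound comes from optimizing $b$ between the adiabatic error (powers $b^{2\gamma\pm1}$ times powers of $E$) and the control loss $b^{-1}E^{1+2\eps}$; with $\gamma=2$ for the stadium this optimization yields $E^{(5+8\eps)/6}$. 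So the exponent is \emph{not} produced by a delicate threshold-mode balance, which is where you locate the difficulty; the paper treats all modes with $k^2\pi^2/L_0^2-E\le E$ uniformly through the appendix estimates, and the genuinely evanescent modes by a convexity (maximum-principle) argument on $(-B_0,b)$.

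Two further inaccuracies would need repair even within your framework. First, your claimed fixed-window observability $\|v\|_{L^2(-B_0,0)}\le C\delta^{-1/2}\|v\|_{L^2(J)}$ is not the correct shape of the estimate: on a window of fixed length the 1D control constant can be taken uniform in the spectral parameter (as in Burq--Zworski), with no $\delta$-loss at all, while on the shrinking window the loss is $b^{-1/2}|\sin(B_0\sqrt{z_k})|^{-1}$, i.e.\ of order $b^{-1/2}E^{1/2}\nu(E)^{-1}\sim b^{-1/2}E^{1/2+\eps}$, which is $\nu(E)^{-1}$ and not $\nu(E)^{-1/2}$; a proof built on the $\delta^{-1/2}$ heuristic would not reproduce the stated exponent. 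Second, for the evanescent modes it is not enough that their mass on $R$ is ``dominated by their mass at $\partial R\cap\overline W$'': that is a trace quantity, and the conclusion requires domination by the $L^2$ mass on a region of positive measure inside the wing (the paper gets $b^{-1/2}\|u_k\|_{L^2(0,b)}$ from a $\sinh$-convexity argument, plus forcing terms, since the 1D equation for $u_k$ is inhomogeneous even in this regime).
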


This bound improves on the Burq-Hassell-Wunsch bound provided that $\eps<\frac{1}{8}$. 
It is natural that the smaller $\eps$ is the 
better the bound is. Indeed, the condition on the distance between $E$ and $\Sigma$ 
is comparable to a non-resonance condition and should imply heuristically that $u$ must have some mass in the wing region. 
It is quite interesting 
to have a quantitative statement confirming this heuristics. 
We will actually give a more general statement concerning more general billiards 
(see Theorem \ref{thm:main}).
In particular we will consider billiards with smoother boundaries (see Def. \ref{sec:defbill}) 
disregarding the fact that these may not be ergodic. 
Here again we expect the bound to be better when the billiard becomes smoother and this statement 
is made quantitative in Theorem \ref{thm:main}.  

The method we propose relies on comparing the Dirichlet energy quadratic form with    
another quadratic form arising from the adiabatic ansatz presented in the numerical study of 
eigenfunctions by B\"acker-Schubert-Stifter \cite{BSS}. This adiabatic quadratic form 
has also appeared recently in the works of Hillairet-Judge \cite{HJ} in the study of 
the spectrum of the Laplacian on triangles. These two quadratic forms are 
close provided we do not enter too deeply into the wing region so that the non-concentration 
estimate really takes place in a neighbourhood of the rectangle that becomes smaller and smaller 
when the energy goes to infinity (see Sections \ref{sec:opt1} and \ref{sec:opt2}). 
Since the new quadratic form may be addressed using separation of variables, we will 
show precise one-dimensional control estimates and then use them to prove our results.  
We have separated these one dimensional estimates in an appendix since they may be 
of independent interest. Finally, we remark that the method can be applied to quasimodes with 
some caution (see Remark \ref{rk:quasi}) but there are no 
reasons to think that the bound we obtain is optimal.

{\sc Acknowledgments.}  
JLM was supported in part by a Hausdorff Center Postdoctoral Fellowship at the University of Bonn, 
in part by an NSF Postdoctoral Fellowship at Columbia University.  Also, JLM wishes to thank the 
MATPYL program which supported his coming to the University of Nantes, where this research began. 
LH was partly supported by the ANR programs {\em NONaa} and {\em Methchaos}.  
In addition, the authors wish to thank Andrew Hassell and Alex Barnett for very helpful conversations 
during the preparation of this draft.

\section{The setting}\label{sec:defbill}
Take $L$ a function defined on $[-B_0,B_1]$ with the following properties :
\begin{itemize}
\item[-] For non-positive $x,$ $L(x)\,=\,L_0 >0.$
\item[-] On $(0,B_1)$, $L$ is smooth, non-negative and non-increasing. 
\item[-] When $x$ goes to $B_1$, $L'$ has a negative limit (either finite or $-\infty$).
\item[-] For small positive $x$, we have the following asymptotic expansions:
\begin{eqnarray}
\label{eq:Lnear0}
\left\{ \begin{array}{c} 
L(x)  =  L_0 - c_L x^\gamma+o(x^\gamma) , \\
L' (x)  =  -c_L \gamma x^{\gamma-1} +o(x^{\gamma-1})  
\end{array} \right.
\end{eqnarray}
for some positive $c_L$ and $\gamma \geq 3/2$.  
\end{itemize} 

\begin{figure}[h]
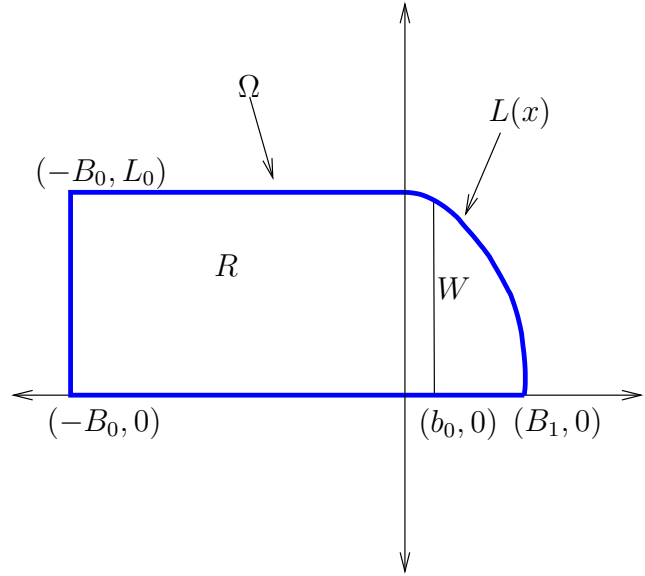

\include{billiard}
\caption{An example of the billiard $\Omega$.}
\label{fig:bil}
\end{figure}

The billiard $\Omega$ is then defined by 
\[
\Omega\,=\,\left \{ (x,y),~|~-B_0\leq x\leq B_1,~0\leq y\leq L(x)\,\right \}.
\]

See Fig. \ref{fig:bil} for an example of an applicable billiard.
For any $b < B_1$, we will denote by $\Omega_b:= \Omega \cap \{ x\leq b\}$ and by 
$W_b:=\Omega \cap \{ 0\leq x\leq b\}.$

We study eigenfunctions of the positive Dirichlet Laplacian, $\Delta$, on $\Omega$.  
Namely, we study solutions, $u_E$ such that
\begin{eqnarray*}
\Delta u_E \,=\,-\left( \partial^2_x\,+\,\partial_y^2\right)u_E\,=\, E u_E, \\
u_E |_{\p \Omega} = 0,
\end{eqnarray*}
where $E > 0$.  

We may formulate this equation using quadratic forms. We thus introduce $q$ defined on $H^1(\Omega)$ by 
\[
q(u)\,=\,\int_{\Omega} | \nabla u|^2 dxdy.
\]
The Euclidean Laplacian with Dirichlet boundary condition in $\Omega$ is the unique self-adjoint operator 
associated with $q$ defined on $H^1_0(\Omega).$   We denote by $q_b$ the restriction of $q$ to $H^1(\Omega_b)$ and by $\Delta_b$ the Dirichlet Laplace operator on 
$\Omega_b.$ We will also denote by $\D_b$ the set of smooth functions with compact support in $\Omega_b.$

\section{Adiabatic approximation}

Motivated by the well-known eigenvalue problem on a rectangular billiard and 
computational results in \cite{BSS}, we introduce a second family of quadratic forms 
$a_b$ and compare it to $q_b.$ 

For any $b < B_1$ and any $u\in \D_b,$ Fourier decomposition in $y$ implies that  
\begin{eqnarray}
\label{eqn:adiab}
u(x,y) = \sum_{k} u_k (x) \sin \left( \frac{\pi k}{L(x)} y \right).
\end{eqnarray}

Since 
\[
\int_0^{L(x)} |\sinL|^2dy = \frac{L(x)}{2}
\]
each Fourier coefficient $u_k$ is given by  
\[ 
u_k(x)\,=\, \left [ \frac{2}{L(x)}\right ] 
\int_0^{L(x)} u(x,y) \sin \left( \frac{\pi k}{L(x)} y \right) \,dy.
\]

For such $u$, we define 
\begin{gather*}
a_b(u) \,=\, \sum_{k\in \NN} \int_{-B_0}^b \left ( |{u}'_k(x)|^2 + \frac{k^2 \pi^2}{L^2(x)} |u_k(x)|^2\right )\,\frac{L(x)}{2}dx,\\
N_b(u) \,=\, \sum_{k\in \NN} \int_{-B_0}^b |u_k(x)|^2\,\frac{L(x)}{2} dx.
\end{gather*}

Observe that for each fixed $x$, Plancherel's formula reads 
\[ 
\sum_{k\in \NN}  |u_k(x)|^2\,\frac{L(x)}{2}  \,=\,\int_0^{L(x)} |u(x,y)|^2 \,dy, 
\]
so that we get $N_b(u) = \| u\|^2_{L^2(\Omega_b)}$ by integration with respect to $x$.

Fixing some $0< b_0< B_1$, and using that $L$ is uniformly bounded above and below on $[-B_0,b_0]$ we find a constant $C$ 
such that for any $b\leq b_0$ and $u\in L^2(\Omega_b) :$
\begin{equation}
 \label{eq:Plancherel}
C^{-1} \|u\|^2_{\Omega_b} \leq \sum_{k=1}^\infty \| u_k\|_{L^2(-B_0,b)}^2 \leq C \|u\|^2_{\Omega_b}.
\end{equation}

The quadratic form $a_b$ appears as the direct sum of the following quadratic forms 
$a_{b,k}$ (that can be defined on the whole function space  $H^1(-B_0,b)$) :
\begin{equation}\label{eq:defnk}
a_{b,k}(u) :=  \int_{-B_0}^b \left ( |{u}'|^2 + \frac{k^2 \pi^2}{L^2(x)} |u|^2\right )\,\frac{L(x)}{2}dx.
\end{equation}

Recall that, on an interval $I,$ the standard $H^1$ norm is defined by 
\begin{equation}
  \label{eq:defH1norm}
  \|u\|_{H^1}:=\left( \|u'\|_{L^2(I)}^2+\|u\|_{L^2(I)}^2 \right)^{\und},
\end{equation}
so that, for any $k$ and $b<B_1$ and any $u\in \CI_0(-B_0,b)$ we have 
\begin{equation}\label{eq:equivnk}
\min \left(L(b)\,,\,\frac{k^2\pi^2}{L_0} \right) \| u\|_{H^1}^2 \,\leq\, a_{b,k}(u)\,\leq\, 
\max \left( L_0\,,\,\frac{k^2\pi^2}{L(b)} \right) \| u\|_{H^1}^2 .
\end{equation}
The norm $a_{b,k}^{\und}$ thus defines on $H^1(-B_0,b)$ a norm that is equivalent to 
the standard $H^1$ norm. 

\subsection{Comparing $a_b$ and $q_b$}
To compare $a_b$ and $q_b$, we introduce the following operators $D$ and $R$ defined on $\D_b$ by 
\begin{gather*}
Ru\,=\,\frac{y{L}'(x)}{L(x)}\partial_y u,\\
Du\,=\,\partial_x u+Ru.
\end{gather*}
 
Using Plancherel formula for each fixed $x$ and then integrating, we obtain 
\[ 
a_b(u)\,=\, \int_{\Omega_b} |Du|^2+ |\partial_y u|^2 dxdy .
\]
from which the following holds for any $u,v \in \D_b.$

\begin{eqnarray}
a_b(u,v)-q_b(u,v)&=& \langle Du,Dv\rangle \,-\,\langle \partial_x u, \partial_x v\rangle  \notag \\
& = &  \langle \partial_x u, Rv\rangle + \langle Ru, Dv\rangle ,  \label{eqn:aqcomp}\\
& = & \langle \partial_x u, Rv\rangle \,+\, \langle Ru, \partial_x v\rangle \,+\,\langle Ru, Rv \rangle \label{eqn:compaq}.
\end{eqnarray}

We thus obtain the following lemma.  

\begin{lem}
\label{lem:normequiv}
Let $\delta$ be the function defined by 
\[
\delta(b)\,=\, \sup_{(0,b]} |L'(x)|\,+\,\sup_{(0,b]} |L'(x)|^2.
\]
Then for all $u,v\in \D_b$ 
\begin{gather*}
\left | a_b(u,v)-q_b(u,v) \right | \,\leq \,\delta(b) \cdot q_b^\und(u)\cdot q^\und_b(v) .
\end{gather*}
\end{lem}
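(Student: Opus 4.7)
The plan is to take the displayed identity (3.9) for $a_b(u,v)-q_b(u,v)$ as the starting point and bound each of its three terms using Cauchy--Schwarz, after first controlling the multiplier $R$ by $\sup|L'|$ times a $\partial_y$ derivative.

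First I would observe that for $(x,y) \in \Omega_b$ one has $0 \le y \le L(x)$, so pointwise
\[
|Ru(x,y)| \;=\; \left|\frac{y L'(x)}{L(x)}\right| |\partial_y u(x,y)| \;\le\; |L'(x)| \cdot |\partial_y u(x,y)|.
\]
Since $L'\equiv 0$ on $[-B_0,0]$ by the hypothesis that $L\equiv L_0$ there, this gives the key estimate
\[
\|Ru\|_{L^2(\Omega_b)} \;\le\; \Bigl(\sup_{(0,b]}|L'|\Bigr) \|\partial_y u\|_{L^2(\Omega_b)} \;\le\; \Bigl(\sup_{(0,b]}|L'|\Bigr) q_b^{1/2}(u),
\]
and similarly for $v$.

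Next I would estimate the three inner products in (3.9). For the ``cubic'' term, Cauchy--Schwarz together with the above bound yields
\[
|\langle Ru, Rv\rangle| \;\le\; \Bigl(\sup_{(0,b]}|L'|\Bigr)^2 q_b^{1/2}(u) q_b^{1/2}(v).
\]
For the two cross terms, rather than estimating each separately and paying the factor $2$, I would exploit the $\mathbb{R}^2$ Cauchy--Schwarz inequality
\[
\|\partial_x u\|\,\|\partial_y v\| + \|\partial_y u\|\,\|\partial_x v\| \;\le\; \bigl(\|\partial_x u\|^2+\|\partial_y u\|^2\bigr)^{1/2}\bigl(\|\partial_x v\|^2+\|\partial_y v\|^2\bigr)^{1/2} = q_b^{1/2}(u)\,q_b^{1/2}(v),
\]
which, combined with the bound $\|Rw\|\le \sup|L'|\,\|\partial_y w\|$, gives
\[
|\langle \partial_x u, Rv\rangle + \langle Ru, \partial_x v\rangle| \;\le\; \Bigl(\sup_{(0,b]}|L'|\Bigr) q_b^{1/2}(u)\,q_b^{1/2}(v).
\]
Adding the three estimates produces the prefactor $\sup_{(0,b]}|L'| + \sup_{(0,b]}|L'|^2 = \delta(b)$.

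The only subtlety here, and the step I would be most careful with, is precisely this combined Cauchy--Schwarz on the two cross terms: naively bounding each separately gives $2\sup|L'|$ rather than $\sup|L'|$, which would not match the definition of $\delta$. Otherwise the proof is a direct calculation once identity (3.9) and the pointwise inequality $y/L(x)\le 1$ are in hand.
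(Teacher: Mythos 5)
Your proof is correct and is essentially the paper's argument: apply Cauchy--Schwarz to the three terms of the identity \refeq{eqn:compaq}, using $0\le y/L(x)\le 1$ (and $L'\equiv 0$ for $x\le 0$) to get $\|Rw\|_{L^2(\Omega_b)}\le \sup_{(0,b]}|L'|\,\|\partial_y w\|_{L^2(\Omega_b)}$. Your additional $\mathbb{R}^2$ Cauchy--Schwarz on the pair of cross terms is a minor refinement that produces the constant $\delta(b)$ exactly, whereas the paper's one-line proof, read literally term by term, would give $2\sup_{(0,b]}|L'|+\sup_{(0,b]}|L'|^2$; the difference is immaterial for how the lemma is used later.
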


\begin{rem}
The function $\delta$ is continuous on $(0,B_1)$ and $\delta (b) =O(b^{\gamma-1})$ when 
$b$ goes to $0.$ 
\end{rem}

\begin{proof}
In \refeq{eqn:compaq}, we use Cauchy-Schwarz inequality, $\max(\| Du\|, \| \partial_y u\|)\leq a_b^\und(u)$, and the fact that 
$\frac{y}{L(x)}$ is uniformly bounded by $1$ on $\Omega.$ 
\end{proof}

The following corollary is then straightforward.
\begin{cor}\label{cor:defLambda}
For any $0<b<B_1$ and any $u\in H^1(\Omega)$, the linear functional $\Lambda$ defined by 
$\Lambda(v):= a_b(u,v)-q_b(u,v)$ belongs to $H^{-1}(\Omega_b).$ 
Moreover 
\[
 \| \Lambda \|_{H^{-1}(\Omega_b)} \,\leq \delta(b) \|u\|_{H^1(\Omega_b)}.
\]
\end{cor}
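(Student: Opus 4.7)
The plan is to read the corollary as a straightforward dualization of Lemma \ref{lem:normequiv}: the lemma already provides the required bilinear bound on the dense subspace $\D_b$, and extracting the $H^{-1}(\Omega_b)$ statement from it is essentially a combination of two continuity arguments plus the trivial domination of the energy norm by the $H^1$ norm.

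First, I would observe that for $w\in H^1(\Omega_b)$ one has $q_b^{\und}(w)=\|\nabla w\|_{L^2(\Omega_b)}\leq \|w\|_{H^1(\Omega_b)}$. Substituting this into Lemma \ref{lem:normequiv} gives, for $u,v\in \D_b$,
\[
|\Lambda(v)| = |a_b(u,v)-q_b(u,v)| \leq \delta(b)\,\|u\|_{H^1(\Omega_b)}\,\|v\|_{H^1(\Omega_b)}.
\]
This is the target inequality, but only for $u$ smooth and compactly supported in $\Omega_b$, while the statement allows $u\in H^1(\Omega)$.

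To upgrade in $u$, I would note that both bilinear forms extend continuously to $H^1(\Omega_b)\times H^1(\Omega_b)$. For $q_b$ this is immediate from the definition. For $a_b$, one uses the representation $a_b(u,v)=\langle Du,Dv\rangle + \langle \partial_y u,\partial_y v\rangle$ introduced before \eqref{eqn:aqcomp}, together with the fact that the multiplier $y L'(x)/L(x)$ in $R$ is bounded on $\Omega_b$ (indeed $|y/L(x)|\leq 1$ and $|L'|$ is bounded on $[0,b]$), so that $D\colon H^1(\Omega_b)\to L^2(\Omega_b)$ is continuous. Therefore the inequality above extends by continuity from $\D_b\times\D_b$ to $H^1(\Omega_b)\times H^1(\Omega_b)$, and in particular to any $u\in H^1(\Omega)$ (through its restriction to $\Omega_b$) and $v\in \D_b$.

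Finally, since $\D_b$ is dense in $H^1_0(\Omega_b)$, the linear functional $v\mapsto \Lambda(v)$, which is bounded on $\D_b$ with norm at most $\delta(b)\|u\|_{H^1(\Omega_b)}$, extends uniquely to a continuous functional on $H^1_0(\Omega_b)$ with the same norm bound. This is exactly the assertion that $\Lambda\in H^{-1}(\Omega_b)$ with $\|\Lambda\|_{H^{-1}(\Omega_b)}\leq \delta(b)\|u\|_{H^1(\Omega_b)}$. There is no serious obstacle here; the only point to watch is that the continuous extensions of $a_b$ and $q_b$ agree on the dense subspace with the definitions used in the lemma, which is built into the formulas.
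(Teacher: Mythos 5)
This is correct and is exactly the paper's (unwritten) argument: the corollary is deduced from Lemma \ref{lem:normequiv} by bounding $q_b^{1/2}$ by the $H^1(\Omega_b)$ norm and using the density of $\D_b$ in $H^1_0(\Omega_b)$ to identify the resulting bounded functional with an element of $H^{-1}(\Omega_b)$. The only wording to adjust is the upgrade in the $u$-slot: since $\D_b$ is not dense in $H^1(\Omega_b)$, this step is not literally an extension by continuity, but, as your own remark about the representation $a_b(u,v)=\langle Du,Dv\rangle+\langle \partial_y u,\partial_y v\rangle$ shows, the identity \eqref{eqn:compaq} and the Cauchy--Schwarz estimate in the proof of Lemma \ref{lem:normequiv} apply verbatim to any $u\in H^1(\Omega_b)$ and $v\in\D_b$, which is all that is needed.
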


\section{Non-Concentration}
\subsection{Preliminary reduction}
Let $u$ be an eigenfunction of $q$ with eigenvalue $E$. 
And define the associated linear functional $\Lambda$ using corollary \ref{cor:defLambda}.

Integration by parts shows that for any $v\in H^1_0(\Omega_b)$ we have 
\[
q_b(u,v)\,=\, E\cdot \langle u,v \rangle_{L^2(\Omega)},
\]
so that we have 

\begin{equation}\label{eq:quasi}
a_b(u,v)-E\cdot N_b(u,v) \,=\, \Lambda(v).
\end{equation}

We now deal with this equation using the adiabatic decomposition. 
We thus define $\Lambda_k$ as the distribution over $\Dcal_b$ such that, 
for any $v\in \Dcal_b$, 
\begin{equation}
 \label{eq:defLambdak} 
\Lambda_k(v) := \Lambda\left( v(x)\sinL \right).
\end{equation}

\begin{rem}
From now on, $u$ will always denote the eigenfunction that we are dealing with. 
We will denote by $u_k$ the functions entering in the adiabatic 
decomposition of $u,$ by $\Lambda$ the linear functional associated with $u$ and by 
$\Lambda_k$ the one-dimensional linear functionals that are associated with $\Lambda$.
\end{rem}

A straightforward computation yields, that for any $v\in \Dcal_b$ we have 
\[ 
a_{b,k}(u_k,v)-E\cdot \int_{-B_0}^b u_k(x) v(x) \frac{L(x)}{2} dx \,=\,\Lambda_k(v),
\]
where $a_{b,k}$ is the quadratic form defined in \refeq{eq:defnk}.

An integration by parts then shows that, in the distributional sense in $(-B_0,b)$, we have  
\begin{equation}\label{eq:ODEk}
-\frac{1}{L}\frac{d}{dx}\left( L u'_k \right )\,+\, \left( \frac{k^2\pi^2}{L^2} -E\right ) u_k \,=\, \tilde{\Lambda}_k,
\end{equation}
where the linear functional $\tilde{\Lambda}_k$ is defined by 
\begin{equation}\label{eq:deftildeLambda}
 \tilde{\Lambda}_k(v):= \Lambda_k \left( \frac{2}{L}\cdot v \right).
\end{equation}

\begin{rem}\label{rk:prefactor}
Since $L$ is not smooth, this definition of $\tilde{\Lambda}_k$ doesn't make sense as a distribution.
However, in the next section, we will prove that $\Lambda_k$ actually is in $H^{-1}$ and, since 
multiplication by $\frac{2}{L}$ is a bounded operator from $H^{-1}(-B_0,b)$ into itself, we thus get 
that $\tilde{\Lambda}_k$ is a perfectly legitimate element of $H^{-1}$. 
Moreover, for any $b_0$ there exists $C(b_0)$ such that for any $b\leq b_0,$ and $v\in \Dcal_b$, we have 
\[
 \| \frac{2}{L}v\|_{H^{-1}(-B_0,b)}\,\leq \,C(b_0)\| v\|_{H^{-1}(-B_0,b)}.
\]
\end{rem}

We denote by $P_k$ the operator that is defined by 
\[
P_k(u)\,=\, -\frac{1}{L}\frac{d}{dx}\left( L u' \right )\,+\, \left( \frac{k^2\pi^2}{L^2} -E\right ) u,
\]
and we try to analyze the way a solution to equation \refeq{eq:ODEk} on $(-B_0,b)$ 
may be controlled by its behaviour on $(0,b)$. 

The strategy will depend upon whether $k$ is large or not, but first we have to get a bound 
on $\Lambda_k$ in some reasonable functional space of distributions.

\subsection{Bounding $\Lambda_k$}
In this section, we prove that each $\Lambda_k$ is actually in $H^{-1}(-B_0,b)$ and provide a bound 
for its $H^{-1}$ norm. 

We first note that, using \refeq{eq:defH1norm}, for any 
$F\in H^{-1}(-B_0,b) ~:$ 
\begin{eqnarray}
\label{eqn:H1equiv}
\| F\|_{H^{-1}(-B_0,b)}\,:= \sup_{\phi \in \D_b} 
\frac{ \left| F(\phi) \right|}{\| \phi\|_{H^1}} \leq \sup_{\phi \in \D_b} 
\frac{ \left| F(\phi)  \right|}{\| \phi'\|_{L^2}}.
\end{eqnarray}

Using \refeq{eqn:aqcomp} in the definition of $\Lambda_k$ -see \refeq{eq:defLambdak}- we obtain 

\[
 \Lambda_k(v)\,=\,\langle \partial_x u, R \left( v(x)\sinL\right) \rangle \,+ \,\langle R u, D \left( v(x)\sinL\right) \rangle. 
\]

Denote by $A_k(v)$ the first term on the right and $B_k(v)$ the second term. By inspection, we have 

\begin{gather*}
A_k(v) := \frac{k\pi}{2}\int_{0}^b v(x) \frac{L'(x)}{L(x)} F_k(x) \,dx\\
B_k(v) := \frac{1}{2}\int_0^b v'(x) L'(x) G_k(x) \, dx, 
\end{gather*}
where we have set 
\begin{gather}
\label{eq:defFk} F_k(x) := \frac{2}{L(x)}\int_0^{L(x)} 1_W \cdot y\partial_x u(x,y) \cdot \cosL \, dy \\
\label{eq:defG_k} G_k(x) := \frac{2}{L(x)} \int_0^{L(x)} 1_W \cdot y\partial_y u(x,y) \cdot \sinL \, dy 
\end{gather}

Since $u\in H^1(\Omega),$ $F_k$ and $G_k$ are $L^2(0,b)$ and we can estimate the $H^{-1}$ norm of $\Lambda_k$ using 
them. 
 
\begin{lem}
\label{lem:Hminus1bd}
For any $b_0<B_1,$ and given $\Lambda_k$ and $F_k,\,G_k$ defined as above, there exists $C = C(\Omega_{b_0})$
\begin{equation}\label{eq:estLambdak}
\| \Lambda_k \|_{H^{-1}} \leq C ( k b^{\gamma} \| F_k \|_{L^2 (0,b)} + b^{\gamma-1} \| G_k \|_{L^2 (0,b)}).
\end{equation}
\end{lem}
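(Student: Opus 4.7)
The plan is to bound $A_k$ and $B_k$ separately, exploiting the decay of $L'$ near $0$ coming from the asymptotic expansion \eqref{eq:Lnear0}. In both cases I use the norm equivalence \eqref{eqn:H1equiv} so that it suffices to test against $\|v'\|_{L^2(-B_0,b)}$ for $v\in\Dcal_b$, i.e.\ $v$ smooth and compactly supported in $(-B_0,b)$. Note that both $A_k$ and $B_k$ involve only the integral over $(0,b)$, since $L'\equiv 0$ on $[-B_0,0]$, and that on $(0,b_0]$ the bound $|L'(x)|\leq C x^{\gamma-1}$ (and hence $|L'(x)/L(x)|\leq C' x^{\gamma-1}$, because $L$ is bounded below on $[-B_0,b_0]$) follows from \eqref{eq:Lnear0}.

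For $B_k$ the argument is immediate: pull out $\|L'\|_{L^\infty(0,b)}\leq C b^{\gamma-1}$, apply Cauchy--Schwarz, and obtain
\[
|B_k(v)| \;\leq\; \tfrac{1}{2}\,\|L'\|_{L^\infty(0,b)}\,\|G_k\|_{L^2(0,b)}\,\|v'\|_{L^2(0,b)} \;\leq\; C\, b^{\gamma-1}\,\|G_k\|_{L^2(0,b)}\,\|v'\|_{L^2},
\]
which is the second term in \eqref{eq:estLambdak}.

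For $A_k$, the key step will be a weighted Hardy--Poincar\'e estimate. Applying Cauchy--Schwarz gives
\[
|A_k(v)| \;\leq\; \tfrac{k\pi}{2}\,\|F_k\|_{L^2(0,b)} \left(\int_0^b \Big|\tfrac{L'(x)}{L(x)}\Big|^2 |v(x)|^2\,dx\right)^{1/2} \;\leq\; C\, k\,\|F_k\|_{L^2(0,b)}\left(\int_0^b x^{2(\gamma-1)} |v(x)|^2\,dx\right)^{1/2}.
\]
Since $v(b)=0$ we may write $v(x)=-\int_x^b v'(s)\,ds$, so $|v(x)|^2\leq (b-x)\|v'\|_{L^2(0,b)}^2$; inserting this and computing the elementary integral $\int_0^b x^{2\gamma-2}(b-x)\,dx\leq C\,b^{2\gamma}$ yields
\[
\int_0^b x^{2(\gamma-1)} |v(x)|^2\,dx \;\leq\; C\, b^{2\gamma}\,\|v'\|_{L^2(0,b)}^2,
\]
which produces the weight $kb^\gamma$ in front of $\|F_k\|_{L^2(0,b)}$. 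Summing the bounds for $A_k$ and $B_k$ and using \eqref{eqn:H1equiv} gives \eqref{eq:estLambdak}.

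The only slightly delicate point is the Hardy-type inequality for the $A_k$-term: one must match the singular weight $x^{2(\gamma-1)}$ against the vanishing of $v$ at $b$ to extract exactly the factor $b^{2\gamma}$, not a worse power, since this dictates the final exponent $\gamma$ (rather than $\gamma-1$) on the $F_k$-contribution. Everything else is a direct application of Cauchy--Schwarz and the pointwise control of $L'$ near the origin.
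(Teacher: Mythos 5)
Your proof is correct, and it matches the paper's proof exactly in its overall structure: the same splitting of $\Lambda_k$ into $A_k$ and $B_k$, the same one-line Cauchy--Schwarz bound $|B_k(v)|\leq C b^{\gamma-1}\|G_k\|_{L^2(0,b)}\|v'\|_{L^2}$ using $\sup_{(0,b]}|L'|\leq Cb^{\gamma-1}$, and the reduction to $\|v'\|_{L^2}$ via \eqref{eqn:H1equiv}. The only divergence is the mechanism for $A_k$: the paper first integrates by parts, writing $A_k(v)=-\frac{k\pi}{2}\int_{-B_0}^b v'(x)\left(\int_0^x \frac{L'(\xi)}{L(\xi)}F_k(\xi)\,d\xi\right)dx$, bounds the primitive pointwise by $Cx_+^{\gamma-\frac{1}{2}}\|F_k\|_{L^2(0,b)}$, and then applies Cauchy--Schwarz again to collect the remaining factor $b^{\frac{1}{2}}$; you instead apply Cauchy--Schwarz directly against the weight $|L'/L|\lesssim x^{\gamma-1}$ and then convert $\left(\int_0^b x^{2\gamma-2}|v|^2\right)^{1/2}$ into $Cb^{\gamma}\|v'\|_{L^2(0,b)}$ by a Hardy--Poincar\'e estimate exploiting $v(b)=0$ (legitimate, since $v\in\Dcal_b$ is compactly supported in the open interval). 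The two routes are essentially equivalent: both rest on the vanishing of the test function at the right endpoint (in the paper this is what kills the boundary term in the integration by parts) together with the $x^{\gamma-1}$ decay of $L'$, and both extract the exponent $\gamma$ rather than the cruder $\gamma-\frac{1}{2}$ one would get without using that vanishing; your write-up makes this last point explicit, which is a nice touch, while the paper's version avoids any pointwise bound on $v$. In both cases the constant depends only on $b_0$, $\gamma$, and the bounds on $L$ over $[-B_0,b_0]$, as the lemma requires.
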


\begin{proof}
We estimate $A_k(v),$ using first an integration by parts 
\[
 A_k(v) := -\frac{k\pi}{2}\int_{-B_0}^b v'(x) \left(\int_{0}^x \frac{L'(\xi)}{L(\xi)}F_k(\xi) \, d\xi\right)\,dx. 
\]
Using Cauchy-Schwarz inequality, and the fact that $L'(\xi)=O(\xi^{\gamma-1})$ we have 
\[
 \left | \int_{0}^x \frac{L'(\xi)}{L(\xi)}F_k(\xi) \, d\xi \right | \,\leq \, C x_+^{\gamma-\frac{1}{2}} \|F_k\|_{L^2(0,b)}.
\]
Inserting into $A_k(v)$ and using Cauchy-Schwarz inequality again we get 
\[
 \left| A_k (v)\right | \leq C \cdot (k b^{\gamma})\|F_k\|_{L^2(0,b)}\cdot \| v' \|_{L^2(-B_0,b)}, 
\]
which gives the claimed bound using \refeq{eqn:H1equiv}.  

The second term is estimated using directly Cauchy-Schwarz estimate and the fact that $\sup_{[0,b]} |L'(x)| \leq C b^{\gamma-1}.$
We get 
\[
 |B_k(v)| \, \leq \,C \cdot b^{\gamma-1} \|G_k\|_{L^2(0,b)}\cdot \| v'\|_{L^2(-B_0,b)}. 
\]
That gives the claimed bound using again \refeq{eqn:H1equiv}.
\end{proof}

Define $F:= 1_W \partial_x u$ and $G:= 1_W \partial_y u.$ 
By definition, $F_k(x)$ is the Fourier coefficient of the function $F(x, \cdot)$ with respect to 
the Fourier basis $\left( y\mapsto \cosL \right)_{k\in \NN\cup \{0\}}.$ 

Using Plancherel formula we get  
\[
 \sum_{k\geq 1} F_k(x)^2 \frac{L(x)}{2}  \leq \int_0^{L(x)} |F(x,y)|^2 dy. 
\]
For the same reason, but using this time the $\sin$ basis we have 

\[
 \sum_{k\geq 1} G_k(x)^2 \frac{L(x)}{2} \,=\, \int_0^{L(x)} |G(x,y)|^2 dy. 
\] 

Integrating with respect to $x$ and bounding $y$ from above and $L(x)$ from below uniformly we get the following lemma :

\begin{lem}\label{lem:boundFG}
For any $b_0$ there exists $C$ depending only on the billiard and $b_0$ such that, for any $b<b_0,$ 
\begin{gather}
 \label{eq:boundsumFk} \sum_{k\geq_1} \|F_k \|^2_{L^2(0,b)} \leq C \| \partial_x u \|^2_{L^2(W_b)}, \\
 \label{eq:boundsumGk} \sum_{k\geq_1} \|G_k \|^2_{L^2(0,b)} \leq C \| \partial_y u \|^2_{L^2(W_b)}.
\end{gather}
\end{lem}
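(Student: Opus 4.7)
The plan is to apply Bessel's inequality (for the cosine family) and Parseval's identity (for the sine basis) slice-by-slice in $x$, and then integrate over $x\in(0,b)$, exploiting the uniform upper and lower bounds on $L$ on $[0,b_0]$.

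First, I would reinterpret \eqref{eq:defFk} and \eqref{eq:defG_k} as identifying $\tfrac{L(x)}{2} F_k(x)$ (respectively $\tfrac{L(x)}{2} G_k(x)$) with the $k$-th cosine (resp.\ sine) Fourier coefficient of the function $y\mapsto 1_W(x,y)\,y\,\partial_x u(x,y)$ (resp.\ $y\mapsto 1_W(x,y)\,y\,\partial_y u(x,y)$) on $[0,L(x)]$, relative to the orthogonal family $\{\cos(\pi k y/L(x))\}_{k\ge 1}$ (resp.\ orthonormal basis $\{\sin(\pi k y/L(x))\}_{k\ge 1}$). Since $u\in H^1(\Omega)$, both $1_W\partial_x u$ and $1_W \partial_y u$ lie in $L^2(\Omega)$, so by Fubini their $x$-slices belong to $L^2(0,L(x))$ for a.e.\ $x$.

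Second, for such $x\in(0,b)$, Bessel's inequality (cosine) and Parseval's identity (sine) give
\begin{gather*}
\sum_{k\ge 1} |F_k(x)|^2\,\frac{L(x)}{2} \,\le\, \int_0^{L(x)} y^2\,|1_W\partial_x u|^2\,dy,\\
\sum_{k\ge 1} |G_k(x)|^2\,\frac{L(x)}{2} \,=\, \int_0^{L(x)} y^2\,|1_W\partial_y u|^2\,dy.
\end{gather*}
Using $|y|\le L(x) \le L_0$ and dividing by $L(x)/2$ yields the pointwise-in-$x$ estimate
\[
\sum_{k\ge 1} |F_k(x)|^2 \,\le\, \frac{2 L_0^2}{L(x)}\int_0^{L(x)} |1_W\partial_x u|^2\,dy,
\]
and analogously for $G_k$.

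Third, since $L$ is non-increasing and positive on $[0,b_0]$, we have $L(x)\ge L(b_0)>0$ on that interval. Integrating the pointwise estimate in $x$ over $(0,b)$ with $b\le b_0$ yields
\[
\sum_{k\ge 1} \|F_k\|_{L^2(0,b)}^2 \,\le\, \frac{2L_0^2}{L(b_0)}\,\|\partial_x u\|_{L^2(W_b)}^2,
\]
and the corresponding bound for $G_k$. This gives \eqref{eq:boundsumFk} and \eqref{eq:boundsumGk} with $C = 2L_0^2/L(b_0)$, which depends only on the billiard and on $b_0$. The argument presents no real obstacle; the only delicate point is the discontinuity of $1_W$, which is harmless because $1_W\partial_x u$ and $1_W\partial_y u$ are genuine $L^2(\Omega)$ functions, making slicewise Parseval/Bessel together with Fubini directly applicable.
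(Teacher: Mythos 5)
Your proof is correct and is essentially the paper's own argument: the paper likewise applies Bessel's inequality (cosine family) and Plancherel (sine basis) slice by slice in $x$ to the functions $y\,1_W\partial_x u$ and $y\,1_W\partial_y u$, then integrates in $x$ after bounding $y$ above by $L_0$ and $L(x)$ below on $[0,b_0]$. The only difference is presentational (your explicit constant $2L_0^2/L(b_0)$ and the remark on the harmlessness of $1_W$), not mathematical.
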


We now switch to the control estimate. We begin by dealing with the modes for 
which $\frac{k^2\pi^2}{L_0^2}-E \geq E$.

\subsection{Large modes}
\subsubsection{A control estimate}  
Equation \refeq{eq:ODEk} may be rewritten 

\begin{equation}
  \label{eq:Schrolm}
-u_k'' \,+\,\left( \frac{k^2\pi^2}{L^2(x)}-E \right )u_k \,=\,h_k,
\end{equation}
where $h_k$ is the element of $H^{-1}$ defined by 
\begin{equation}
  \label{eq:defhklm}
h_k := \tilde{\Lambda}_k + \frac{L'}{L}u_k'  
\end{equation}

The $H^{-1}$ norm of $h_k$ is now estimated using the following

\begin{lem}\label{lem:normhklm}
There exists a constant $C:=C(b_0)$ such that for any $b\leq b_0$ and any $k$ such that 
$\frac{k^2\pi^2}{L_0^2}-E\geq E$ the following estimate holds:
\begin{equation}
  \label{eq:normhklm}
  \| h_k\|_{H^{-1}(-B_0,b)}\,\leq \, C(b_0)\left(   k b^{\gamma} \| F_k \|_{L^2 (0,b)} + b^{\gamma-1} \| G_k \|_{L^2 (0,b)}  \,+\,b^{\gamma-1}\|{u_k}\|_{L^2(0,b)} \right).
\end{equation}
\end{lem}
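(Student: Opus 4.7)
The plan is to decompose $h_k = \tilde\Lambda_k + \frac{L'}{L}u_k'$ and estimate each summand in $H^{-1}(-B_0, b)$.

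For $\tilde\Lambda_k$, I combine Lemma~\ref{lem:Hminus1bd} (which controls $\|\Lambda_k\|_{H^{-1}}$) with Remark~\ref{rk:prefactor}: since $L, L' \in L^\infty([-B_0, b_0])$ and $L$ is bounded below there, multiplication by $2/L$ is bounded on $H^1_0(-B_0,b)$, and hence on $H^{-1}$ by duality, with constant depending only on $b_0$. Together with $\tilde\Lambda_k(v) = \Lambda_k(2v/L)$ this yields
\[
\|\tilde\Lambda_k\|_{H^{-1}} \le C(b_0)\bigl(kb^\gamma\|F_k\|_{L^2(0,b)} + b^{\gamma-1}\|G_k\|_{L^2(0,b)}\bigr),
\]
contributing the first two terms of the stated bound.

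For $\frac{L'}{L}u_k'$ (supported in $(0, b]$ since $L' \equiv 0$ on $[-B_0, 0]$), I use the distributional Leibniz identity
\[
\frac{L'}{L}u_k' = \partial_x\!\Bigl(\frac{L'}{L}u_k\Bigr) - \Bigl(\frac{L'}{L}\Bigr)'u_k
\]
on $(-B_0,b)$. The first piece has $H^{-1}$-norm bounded by $\|\tfrac{L'}{L}u_k\|_{L^2(-B_0, b)} \le Cb^{\gamma-1}\|u_k\|_{L^2(0, b)}$ via the pointwise bound $|L'/L|\le Cb^{\gamma-1}$ on $(0, b)$ coming from~\eqref{eq:Lnear0}. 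For the second piece $(\tfrac{L'}{L})'u_k = (L''/L - (L'/L)^2)u_k$, the quadratic contribution is bounded in $L^2$ by $Cb^{2(\gamma-1)}\|u_k\|_{L^2(0,b)} \le Cb^{\gamma-1}\|u_k\|_{L^2(0,b)}$ (for $b \le b_0 \le 1$, $\gamma \ge 1$). The $(L''/L)u_k$ term is the main obstacle: the smoothness of $L$ on $(0,B_1)$ together with~\eqref{eq:Lnear0} gives $|L''(x)| \le Cx^{\gamma-2}$ near $0$, and hence $\|L''/L\|_{L^1(0, b)} \le Cb^{\gamma-1}$. Pairing against $\phi \in H^1_0(-B_0, b)$ and applying the pointwise estimate $|\phi(x)| \le \sqrt{x+B_0}\,\|\phi'\|_{L^2}$ together with a suitably weighted Cauchy--Schwarz inequality allows the extraction of $\|u_k\|_{L^2(0,b)}$ on the right-hand side. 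Summing the contributions from both summands produces the desired inequality.

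The main obstacle is this last step, where one must extract $\|u_k\|_{L^2(0, b)}$ rather than the less useful $\|u_k\|_{L^\infty}$ or $\|u_k\|_{H^1}$ on the right-hand side. The assumption $\gamma \ge 3/2$ enters precisely to ensure $L^1$-integrability of $L''/L$ with the claimed $b^{\gamma-1}$ prefactor, with $\gamma = 3/2$ being the borderline case that determines the final exponent.
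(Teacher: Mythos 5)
Your decomposition and the treatment of $\tilde\Lambda_k$ and of $\partial_x\bigl(\tfrac{L'}{L}u_k\bigr)$ coincide with the paper's proof, and those parts are fine. The gap is in the term $\tfrac{L''}{L}u_k$, which you yourself flag as the main obstacle but then dispatch with an unspecified ``suitably weighted Cauchy--Schwarz.'' The tool you actually name, $|\phi(x)|\le\sqrt{x+B_0}\,\|\phi'\|_{L^2}$, uses the vanishing of $\phi$ at the \emph{far} endpoint $-B_0$ and hence only yields $|\phi|\le C\|\phi'\|_{L^2}$ on $(0,b)$, with no gain as $b\to 0$. Feeding that into the pairing gives at best
$\bigl|\int_0^b \tfrac{L''}{L}u_k\phi\,dx\bigr|\le C\|\phi'\|_{L^2}\,\|x^{\gamma-2}\|_{L^2(0,b)}\|u_k\|_{L^2(0,b)}\le C\,b^{\gamma-\frac32}\|u_k\|_{L^2(0,b)}\|\phi'\|_{L^2}$,
which is weaker by a factor $b^{-1/2}$ than the claimed coefficient $b^{\gamma-1}$ in \eqref{eq:normhklm}. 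So as written you do not prove the stated lemma; the missing half power is exactly where the work lies, and nothing in your sketch supplies it. (Your closing remark also misdiagnoses the role of $\gamma\ge 3/2$: $L^1$-integrability of $x^{\gamma-2}$ on $(0,b)$ with prefactor $b^{\gamma-1}$ only needs $\gamma>1$; the genuine borderline is square-integrability of $x^{\gamma-2}$, i.e.\ $\gamma>3/2$, and at $\gamma=3/2$ exactly the straightforward Cauchy--Schwarz picks up a logarithm.)

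The paper recovers the factor $b^{1/2}$ by integrating by parts once more: it writes $\int(\tfrac{L''}{L}-\tfrac{(L')^2}{L^2})u_k\,\phi = -\int_{-B_0}^b\bigl(\int_0^x \xi_+^{\gamma-2}\,|u_k(\xi)|\,d\xi\bigr)\phi'(x)\,dx$ (up to constants and signs), bounds the inner integral by $Cx^{\gamma-\frac32}\|u_k\|_{L^2(0,b)}$, and then Cauchy--Schwarz of $x^{\gamma-\frac32}$ against $\phi'$ over the \emph{short} interval $(0,b)$ produces $\bigl(\int_0^b x^{2\gamma-3}dx\bigr)^{1/2}\le Cb^{\gamma-1}$; the gain comes from pairing with $\phi'$ over $(0,b)$ rather than with $\phi$ in sup norm. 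Equivalently, you could repair your route by using the vanishing of $\phi$ at the \emph{near} endpoint $b$: $|\phi(x)|\le\sqrt{b-x}\,\|\phi'\|_{L^2}\le\sqrt b\,\|\phi'\|_{L^2}$ for $x\in(0,b)$, which restores the missing $b^{1/2}$ and yields the stated $b^{\gamma-1}\|u_k\|_{L^2(0,b)}$ (for $\gamma>3/2$; the case $\gamma=3/2$ needs the same extra care in either argument). Your splitting off of $(L'/L)^2u_k$ as an $L^2$ term with coefficient $b^{2(\gamma-1)}$ is harmless and slightly more elementary than the paper's joint treatment, but it does not affect the problematic term.
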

\begin{proof}
Using remark \ref{rk:prefactor}, the norm of $\tilde{\Lambda}_k$ is uniformly controlled by the norm of $\Lambda_k$ and 
the latter is estimated using Lemma \refeq{lem:Hminus1bd}. 
To estimate the $H^{-1}$ norm of $\frac{L'}{L}u'_k,$ 
we first set $v= \frac{L'}{L} u_k',$ and remark that 
\begin{eqnarray*}
v= \left( \frac{L'}{L} u_k \right)' -
\left(\frac{L''}{L}-\frac{(L')^2}{L^2}\right)u_k.
\end{eqnarray*}

We choose a test function $\phi$ and estimate 
\[
I_1\,=\,\int_{-B_0}^b \left( \frac{L'}{L}u_k \right)' \phi\,dx.
\]
We perform an integration by parts, use that 
$\frac{L'(x)}{L(x)}\leq C b^{\gamma-1}1_{x>0}$, 
then apply the Cauchy-Schwarz inequality to get 
\begin{eqnarray*}
\left| \int_{-B_0}^b \left( \frac{L'}{L}u_k \right)' \phi \,dx \right | & =& 
\left| \int_{-B_0}^b \frac{L'(x)}{L(x)}u_k (x) \phi'(x)\,dx\right |\\
&\leq & Cb^{\gamma-1}\|u_k \|_{L^2(0,b)}\| \phi'\|_{L^2(-B_0,b)}.
\end{eqnarray*}

We then estimate
\[
I_2\,=\,\left | \int_{-B_0}^b\left(\frac{L''(x)}{L(x)}-
\frac{(L'(x))^2}{L^2(x)}\right)u(x)\phi(x)\,dx\right|.
\]
We perform an integration by parts, use that 
\[
\left|\left(\frac{L''(x)}{L(x)}-\frac{(L'(x))^2}{L^2(x)}\right)\right|\,\leq\, C x_+^{\gamma-2}, 
\]  
then twice apply the Cauchy-Schwarz inequality to get 
\begin{eqnarray*}
I_2&\leq & C \int_{-B_0}^b \left(\int_0^x \xi_+^{\gamma-2}|u(\xi)|\,d\xi\right)|\phi'(x)|\, dx \\
&\leq & C b^{\gamma-1}\|u\|_{L^2(0,b)}\|\phi'\|_{L^2(0,b)}   .
\end{eqnarray*}
 The claim follows using \refeq{eqn:H1equiv}.
\end{proof}

The variational formulation of equation \refeq{eq:Schrolm} is given by 
\begin{equation}\label{eq:varlm}
\int_{-B_0}^b u_k'v' \,dx \,+\,\int_{-B_0}^b\left( \frac{k^2\pi^2}{L^2(x)}-E\right )u_kv \, dx \,=\,h_k(v).
\end{equation}

Since $\frac{k^2\pi^2}{L_0^2}-E \geq E$, the left-hand side of the preceding equation is a continuous 
quadratic form on $H^1_0(-B_0,b)$ so that, using Lax-Milgram theory, there exists a unique $v_k$ in 
$H^1_0(-B_0,b)$ that satisfies \refeq{eq:Schrolm} in the distributional sense.

The following lemma allows us to estimate the $L^2$ norm of this $v_k$.

\begin{lem}\label{lem:estvlm}
There exists a constant $C$ depending only on $b_0$ but 
neither on $b<b_0,$ $k$, nor $E$ such that, if $E\geq 1$ and $\frac{k^2\pi^2}{L_0^2}-E\geq E$ 
then the variational solution $v_k$ in $H^1_0(-B_0,b)$ to equation \refeq{eq:Schrolm} 
satisfies 
\begin{eqnarray}
  \label{eq:estvlm}
  \| v_k\|_{L^2(-B_0,b)}\,&\leq &\, C(b_0)\left( b^{\gamma} \| F_k \|_{L^2 (0,b)} +
 E^{-\und} b^{\gamma-1} \| G_k \|_{L^2 (0,b)}\,\right .\\
\nonumber & & \left. +\, E^{-\und}  b^{\gamma-1}\| u_k\|_{L^2(0,b)} \right).
\end{eqnarray}
\end{lem}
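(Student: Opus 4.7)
The plan is to combine the variational formulation of \refeq{eq:Schrolm} with a piecewise decomposition of the right-hand side $h_k$, handling each piece with the sharpest available bound. The key observation is that in the regime $\frac{k^2\pi^2}{L_0^2}-E\geq E$ the potential $\frac{k^2\pi^2}{L^2(x)}-E$ is bounded below by both $E$ and a constant multiple of $k^2$; depending on which piece of $h_k$ we are estimating, one or the other of these lower bounds is the useful one.

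First I would decompose $h_k=h_k^F+h_k^G+h_k^u$, where $h_k^F$, $h_k^G$ come respectively from the $A_k$ and $B_k$ contributions to $\tilde{\Lambda}_k$, and $h_k^u=\frac{L'}{L}u_k'$. By linearity of Lax–Milgram, $v_k=v_k^F+v_k^G+v_k^u$, so it suffices to bound each piece separately and apply the triangle inequality. Inspecting the proofs of Lemma \ref{lem:Hminus1bd} and Lemma \ref{lem:normhklm} shows that each of the three linear functionals actually admits pointwise bounds in terms of $\|\phi'\|_{L^2}$ alone, namely
\begin{gather*}
|h_k^F(\phi)|\leq C\,k\,b^{\gamma}\|F_k\|_{L^2(0,b)}\|\phi'\|_{L^2},\quad
|h_k^G(\phi)|\leq C\,b^{\gamma-1}\|G_k\|_{L^2(0,b)}\|\phi'\|_{L^2},\\
|h_k^u(\phi)|\leq C\,b^{\gamma-1}\|u_k\|_{L^2(0,b)}\|\phi'\|_{L^2},
\end{gather*}
valid for any $\phi\in H^1_0(-B_0,b)$ (the factor $\frac{2}{L}$ in $\tilde{\Lambda}_k$ is harmless by Poincar\'e, as in Remark \ref{rk:prefactor}).

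Next I would use the variational identity $Q(v_k^{(i)})=h_k^{(i)}(v_k^{(i)})$, where
\[
Q(w)=\int_{-B_0}^{b}(w')^2dx+\int_{-B_0}^{b}\left(\frac{k^2\pi^2}{L^2(x)}-E\right)w^2dx.
\]
Since $L(x)\leq L_0$ and $\frac{k^2\pi^2}{L_0^2}-E\geq E$, we have $\frac{k^2\pi^2}{L^2(x)}-E\geq \max\bigl(E,\frac{k^2\pi^2}{2L_0^2}\bigr)\geq c\max(E,k^2)$. Hence $Q(w)\geq\|w'\|_{L^2}^2+c\max(E,k^2)\|w\|_{L^2}^2$. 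Pairing this with the estimates on $h_k^{(i)}$ and absorbing $\|(v_k^{(i)})'\|_{L^2}$ via Young's inequality yields
\[
\max(E,k^2)\,\|v_k^{(i)}\|_{L^2}^2\,\leq\, C A_i^2,
\]
where $A_F=k b^\gamma\|F_k\|_{L^2}$, $A_G=b^{\gamma-1}\|G_k\|_{L^2}$, $A_u=b^{\gamma-1}\|u_k\|_{L^2}$.

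The only delicate point — and the main reason we decompose rather than simply invoking \refeq{eq:normhklm} — is precisely the $F_k$-term. Dividing its bound through by $E$ would produce the useless factor $E^{-1/2}k\gtrsim 1$ in the present regime; but dividing by $\max(E,k^2)\geq c k^2$ cancels exactly the $k$ in $A_F$, giving $\|v_k^F\|_{L^2}\leq Cb^\gamma\|F_k\|_{L^2}$. For the $G_k$ and $u_k$ pieces, dividing by $\max(E,k^2)\geq E$ produces the prefactor $E^{-1/2}$ as claimed. Summing the three contributions gives \refeq{eq:estvlm}, with constants depending only on $b_0$.
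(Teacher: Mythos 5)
Your proof is correct and follows essentially the same route as the paper: test the variational formulation of \refeq{eq:Schrolm} with the solution itself, use that in this regime the potential gap satisfies $\frac{k^2\pi^2}{L^2(x)}-E\geq\frac{k^2\pi^2}{L_0^2}-E\geq\max\bigl(E,\tfrac{k^2\pi^2}{2L_0^2}\bigr)$, and cancel the factor $k$ in the $F_k$-term against this gap while using the lower bound $E$ for the other two terms. The only differences are organizational — you split $h_k$ by linearity of Lax--Milgram and absorb $\|v_k'\|$ by Young's inequality, whereas the paper keeps $h_k$ whole, first gets $\|v_k\|_{H^1}\leq C\|h_k\|_{H^{-1}}$ via Poincar\'e and then bounds the $F_k$-coefficient by the supremum of $k^2/(\tfrac{k^2\pi^2}{L_0^2}-E)$ — and neither change affects the substance.
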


\begin{proof}
Since $v_k$ is a variational solution, putting $v=v_k$ in \refeq{eq:varlm} we get 
\begin{equation}\label{eq:varvk}
\int_{-B_0}^{b} |v_k'(x)|^2 dx \,+\,\int_{-B_0}^b \left(\frac{k^2\pi^2}{L^2(x)}-E\right)|v_k(x)|^2 dx \,=\,h_k(v_k).
\end{equation}
In the regime we are considering the second integral on the left is positive so that we obtain 
\[
\int_{-B_0}^{b} |v_k'(x)|^2 dx \,\leq\,|h_k(v_k)| \,\leq\,\|h_k\|_{H^{-1}}\|v_k\|_{H^1}.
\]
Since $v_k$ is in $H^1_0(-B_0,b),$ Poincar\'e inequality gives $c(b)$ a positive continuous 
function of $b$ such that 
\[
\int_{-B_0}^{b} |v_k'(x)|^2 dx \,\geq\, c(b) \|v_k\|^2_{H^1}.
\]
This gives a constant $C$ depending only on $b_0$ such that, for any $0<b<b_0,$ we have 
\[
\| v_k\|_{H^1}\leq C \| h_k\|_{H^{-1}}.
\]
We now use again equation \refeq{eq:varvk} to obtain 
\[
\left( \frac{k^2\pi^2}{L_0^2}-E\right) \int_{-B_0}^b |v_k(x)|^2 dx \,\leq\, \|h_k\|_{H^{-1}}\|v_k\|_{H^{1}} 
\,\leq\, C \|h_k\|_{H^{-1}}^2
\]
with the preceding bound.
Using estimate \refeq{eq:normhklm} we obtain 
\[
\left( \frac{k^2\pi^2}{L_0^2}-E\right)^\und \| v_k\|_{L^2(-B_0,b)}\,\leq\, C
\left(  k b^{\gamma} \| F_k \|_{L^2 (0,b)} + b^{\gamma-1} \| G_k \|_{L^2 (0,b)} +b^{\gamma-1}\| u_k\|_{L^2(0,b)}\right).
\]
We divide both sides by $\left( \frac{k^2\pi^2}{L_0^2}-E\right)^\und$. 
The coefficient in front of  $b^{\gamma} \| F_k \|_{L^2 (0,b)} $ is bounded by  using the fact that
\begin{eqnarray*}
\sup_{\frac{k^2\pi^2}{L_0^2} -E \geq E} \frac{k^2}{\frac{k^2\pi^2}{L_0^2}-E}&=&
\sup_{Z\geq E}\frac{L_0^2}{\pi^2}\left(1+\frac{E}{Z}\right)\\
&=& \frac{L_0^2}{\pi^2}\left(1+\frac{E}{E}\right).
\end{eqnarray*}   
For the two other terms, we use simply that $\frac{k^2\pi^2}{L_0^2}-E\geq E.$
This gives the lemma.
\end{proof}

We can now let $w_k= u_k-v_k.$ By construction, $w_k$ is a solution 
to the homogeneous equation 

\begin{equation}\label{eq:Homlm}
-w''+ \left(\frac{k^2\pi^2}{L^2(x)}-E\right)w = 0.
\end{equation}

Moreover, since both $u_k$ and $v_k$ satisfy Dirichlet boundary condition 
at $-B_0$ we have that $w_k(-B_0)=0.$ 

Since the 'potential' part in equation \refeq{eq:Homlm} is bounded 
below by $E$, concentration properties of solutions may be 
obtained using convexity estimates.   

\begin{lem}\label{lem:wlm}
For any $b\leq b_0$, any solution $w$ to \refeq{eq:Homlm} 
such that $w (-B_0)=0$  satisfies 
\[
b\cdot \int_{-B_0}^b |w|^2(x)\, dx \leq (B_0+b_0)\int_0^b |w|^2(x) \, dx .
\]
\end{lem}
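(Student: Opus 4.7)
The plan is to use the positivity of the ``potential'' $V(x) := k^2\pi^2/L^2(x) - E$ on $[-B_0,b]$. Under the hypothesis $k^2\pi^2/L_0^2 - E \geq E$, together with the fact that $L(x) \leq L_0$ throughout $[-B_0,b]$, one has $V(x) \geq E \geq 0$ everywhere. A direct differentiation using $w'' = Vw$ then shows that $f(x) := |w(x)|^2$ is a non-negative convex function on $[-B_0,b]$, since
\[
f''(x) \,=\, 2|w'(x)|^2 \,+\, 2V(x)|w(x)|^2 \,\geq\, 0,
\]
and it vanishes at $-B_0$ by the boundary assumption.

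The next step is to invoke the classical convex-analysis fact: for a convex $f$ with $f(-B_0)=0$, the secant-slope function
\[
g(x) \,:=\, \frac{f(x)}{x+B_0}, \qquad x\in(-B_0,b],
\]
is monotone non-decreasing. This is the only non-computational input, and it is precisely where the Dirichlet condition at $-B_0$ enters.

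The conclusion then follows by splitting $\int_{-B_0}^b f\,dx=\int_{-B_0}^0 f\,dx+\int_0^b f\,dx$ and exploiting monotonicity of $g$ on either side of the pivot $x=0$. Writing $f(x)=g(x)(x+B_0)$, I would estimate
\[
\int_{-B_0}^0 f(x)\,dx \,\leq\, g(0)\int_{-B_0}^0 (x+B_0)\,dx \,=\, g(0)\,\tfrac{B_0^2}{2},
\]
and
\[
\int_0^b f(x)\,dx \,\geq\, g(0)\int_0^b (x+B_0)\,dx \,\geq\, g(0)\,B_0\,b.
\]
Eliminating $g(0)$ yields $\int_{-B_0}^0 f \leq \frac{B_0}{2b}\int_0^b f$; adding $\int_0^b f$ to both sides gives
\[
\int_{-B_0}^b f(x)\,dx \,\leq\, \frac{B_0+2b}{2b}\int_0^b f(x)\,dx,
\]
so that, after multiplying by $b$ and using $b\leq b_0$, one obtains the announced estimate with constant $B_0/2+b_0 \leq B_0+b_0$.

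The main ``obstacle'' is conceptual rather than computational: the point is to recognise that the regime $k^2\pi^2/L_0^2-E \geq E$ is designed precisely to make $V$ non-negative, and hence $|w|^2$ convex. Once that observation is in hand, everything reduces to the elementary secant-slope argument above. Some minor care is needed in the complex-valued case, but since $\bar w$ satisfies the same real equation as $w$, the identity $(w\bar w)''=2|w'|^2+2V|w|^2$ still holds and the argument goes through unchanged.
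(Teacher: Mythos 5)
Your proof is correct, and it rests on the same underlying idea as the paper's (the paper itself frames this step as a ``convexity estimate''): positivity of the potential in the large-mode regime forces $(|w|^2)''\geq 0$, and the Dirichlet condition at $-B_0$ then pushes the mass of $w$ toward the right end of the interval. The implementations differ, though. The paper uses the stronger differential inequality $(w^2)''\geq 2E\,w^2$, applies the maximum principle to compare $w^2$ with the barrier $\sinh(\beta(x+B_0))$, and then integrates the resulting pointwise bound along the parametrized family $x(t)=-B_0+t(B_0+b)$, $\xi(t)=tb$, using monotonicity of $\sinh$ to bound the ratio by $1$. You dispense with the maximum principle and the barrier altogether: you only use bare convexity of $f=|w|^2$ together with the monotonicity of the secant slope $f(x)/(x+B_0)$ from the pinned endpoint, split the integral at $x=0$, and compare each piece to $g(0)$. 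This is more elementary (no comparison principle needed, and the quantitative lower bound $V\geq E$ is never used — which costs nothing, since the paper's argument also only exploits that the $\sinh$-quotient is at most $1$), and it even yields the slightly better constant $B_0/2+b$ in place of $B_0+b$. The only points worth making explicit, which you essentially do, are the reduction to $f=w\bar w$ in the complex case and the fact that the potential is continuous on $[-B_0,b]$ so that the pointwise computation of $f''$ is legitimate (distributional convexity would suffice in any case).
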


\begin{proof}
Multiplying the equation by $w$ we find 
\[
- w''w + \left(\frac{k^2\pi^2}{L^2(x)}-E\right) w^2 =0.
\]
It follows that 
\[
(w^2)'' \geq \beta^2 w^2, 
\]
for some positive $\beta$ (here $\beta^2\,=\,2E$).

Since $w (-B_0)=0$, using the maximum principle on $[-B_0,\xi]$, we obtain 
for all $ -B_0\leq x \leq \xi \leq b_0$
\[
w^2(x) \leq w^2(\xi) \frac{\sinh(\beta(x+B_0))}{\sinh(\beta(\xi+B_0))}.
\] 
For any $t\in [0,1],$ define $x(t)\,=\,-B_0+ t(B_0+b)$ and $\xi(t)=t b.$ 
Since for any $t$ we have $-B_0\leq x(t) \leq \xi(t) \leq b_0$, we may integrate the 
preceding relation :
\[
\int _0^1 w^2(x(t)) \, dt \leq \int_0^1 w^2(\xi(t)) 
\frac{\sinh(\beta(x(t)+B_0))}{\sinh(\beta(\xi(t)+B_0))} \, dt.
\]
Since $\sinh$ is increasing the quotient of $\sinh$ is bounded above by $1$ and we obtain 
\[
b\cdot \int_{-B_0}^b w^2(x) \,dx \leq (B_0+b)\int_{0}^b w^2(x) \, dx.
\]
\end{proof}

Putting these two lemmas together we obtain the following

\begin{prop}\label{prop:controllm}
There exists a constant $C$ depending only on $b_0$ 
such that for any $b\leq b_0$, for any $k$ and $E$ 
such that $k^2\pi^2/L_0^2-E \geq E$ and $E\geq 1$
\begin{eqnarray}
\label{eq:controllm}
\| u_k\|_{L^2(-B_0,b)}  & \leq &  C \left( b^{\gamma-\und} \| F_k \|_{L^2 (0,b)} + E^{-\und} b^{\gamma-\frac{3}{2}} \| G_k \|_{L^2 (0,b)} \right. \\
&& \left. + b^{-\und}\| u_k\|_{L^2(0,b)}  \right) \notag
\end{eqnarray}
for $C = C(b_0)$.
\end{prop}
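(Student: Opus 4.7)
\medskip

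\noindent\textbf{Proof plan.} The strategy is to exploit the decomposition $u_k = v_k + w_k$ introduced above, where $v_k \in H^1_0(-B_0,b)$ is the variational solution to \refeq{eq:Schrolm} supplied by Lax--Milgram and $w_k = u_k - v_k$ solves the homogeneous equation \refeq{eq:Homlm} with $w_k(-B_0)=0$. By the triangle inequality,
\[
\|u_k\|_{L^2(-B_0,b)} \leq \|v_k\|_{L^2(-B_0,b)} + \|w_k\|_{L^2(-B_0,b)},
\]
so it suffices to control each term separately using Lemmas \ref{lem:estvlm} and \ref{lem:wlm}.

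\medskip

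\noindent First I would apply Lemma \ref{lem:wlm} to $w_k$, which gives
\[
\|w_k\|_{L^2(-B_0,b)} \leq \sqrt{\tfrac{B_0+b_0}{b}}\, \|w_k\|_{L^2(0,b)} \leq C b^{-\und}\|w_k\|_{L^2(0,b)}.
\]
Since $w_k = u_k - v_k$ on $(0,b)$, a further triangle inequality yields
\[
\|w_k\|_{L^2(0,b)} \leq \|u_k\|_{L^2(0,b)} + \|v_k\|_{L^2(0,b)} \leq \|u_k\|_{L^2(0,b)} + \|v_k\|_{L^2(-B_0,b)}.
\]
Combining these two steps produces
\[
\|u_k\|_{L^2(-B_0,b)} \leq (1 + C b^{-\und})\|v_k\|_{L^2(-B_0,b)} + C b^{-\und}\|u_k\|_{L^2(0,b)}.
\]

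\medskip

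\noindent Next I insert the estimate \refeq{eq:estvlm} for $\|v_k\|_{L^2(-B_0,b)}$ from Lemma \ref{lem:estvlm} and multiply through by the factor $b^{-\und}$ (up to the harmless constant term $1$, which is dominated for small $b$). This gives, with $C=C(b_0)$,
\[
\|u_k\|_{L^2(-B_0,b)} \leq C\left(b^{\gamma-\und}\|F_k\|_{L^2(0,b)} + E^{-\und}b^{\gamma-\frac{3}{2}}\|G_k\|_{L^2(0,b)} + E^{-\und}b^{\gamma-\frac{3}{2}}\|u_k\|_{L^2(0,b)} + b^{-\und}\|u_k\|_{L^2(0,b)}\right).
\]
Since $\gamma \geq 3/2$ and $E \geq 1$ and $b \leq b_0$, we have $E^{-\und}b^{\gamma-\frac{3}{2}} \leq b_0^{\gamma-\frac{3}{2}} \leq C b^{-\und}$, so the third term on the right is absorbed into the last one, producing exactly the claimed bound \refeq{eq:controllm}.

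\medskip

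\noindent No step here is really an obstacle: the hard analytic work has already been done in Lemmas \ref{lem:estvlm} and \ref{lem:wlm}. The only point requiring a moment of care is verifying that the $b^{-\und}$ loss arising from Lemma \ref{lem:wlm} does not degrade the leading $b^{\gamma-\und}\|F_k\|$ and $E^{-\und}b^{\gamma-\frac{3}{2}}\|G_k\|$ terms further than claimed; this is why we separate the $v_k$ contribution (already small in $b$) from the $u_k\rest_{(0,b)}$ contribution, rather than bounding $u_k$ on $(0,b)$ via $v_k + w_k$ and incurring an extra $b^{-\und}$ factor on the source terms.
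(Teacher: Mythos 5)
Your proposal is correct and follows essentially the same route as the paper: decompose $u_k=v_k+w_k$, apply Lemma \ref{lem:wlm} to $w_k$ together with the triangle inequality, insert estimate \refeq{eq:estvlm} of Lemma \ref{lem:estvlm}, and absorb the $E^{-\und}b^{\gamma-\frac{3}{2}}\|u_k\|_{L^2(0,b)}$ term into the $b^{-\und}\|u_k\|_{L^2(0,b)}$ term using $E\geq 1$, $\gamma\geq 3/2$ and $b\leq b_0$. The only cosmetic difference is bookkeeping: the paper factors the prefactor as $b^{-\und}(1+E^{-\und}b^{\gamma-1})$ while you bound $E^{-\und}b^{\gamma-\frac{3}{2}}$ directly by $Cb^{-\und}$, which is the same observation.
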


\begin{proof}
According to Lemma \ref{lem:wlm}  we have 
\[
\|w_k\|_{L^2(-B_0,b)} \leq C b^{-\und} \| w_k\|_{L^2(0,b)},
\]
where $w_k=u_k-v_k$ and $v_k$ is the variational solution constructed above.
Using the reverse triangle inequality, we obtain
\begin{eqnarray*}
\| u_k \|_{L^2(-B_0,b)} \leq C b^{-\und} \|u_k\|_{L^2(0,b)} + 
(C+b_0^\und) b^{-\und} \| v_k \|_{L^2 (-B_0,b)} .
\end{eqnarray*}
The claim will follow using estimate \refeq{eq:estvlm} of Lemma \ref{lem:estvlm}.
Observe that the prefactor of $\|u_k\|_{L^2(-B_0,b)}$ is at first (up to a constant prefactor) 
\[
 b^{-\und}\,+\,b^{-\und}E^{-\und}b^{\gamma-1}.
\]
Since $E^{-\und}b^{\gamma-1}$ is uniformly bounded we obtain the given estimate.
\end{proof}

\subsubsection{Summing over $k$}
We will now sum the preceding estimates over $k$. We thus introduce 
\[
u_+(x,y)\,=\,\sum_{\frac{k^2\pi^2}{L_0}-E\geq E} u_k (x) \sin \left( \frac{k \pi y}{L(x)}\right)
\]
and prove the following proposition.

\begin{prop}\label{prop:estuplus}
There exists $b_0$ and $E_0$ and a constant $C$ depending only on $E_0$ and $b_0$ 
such that if $u$ is an eigenfunction with energy $E\,>\,E_0$ and $b<b_0,$ 
then :
\[
\| u_+\|_{L^2(R)}^2 \,\leq C \left[(b^{2\gamma-1} \| \partial_x u\|^2_{W_b} \,
+\,E^{-1}b^{2\gamma-3}\| \partial_y u \|^2_{L^2(W_b)} \,+\, b^{-1}\|u\|^2_{L^2(W_b)}  \right] .
\]
\end{prop}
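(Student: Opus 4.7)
The plan is to start from the per-mode control estimate Proposition \ref{prop:controllm} and $L^2$-sum over the admissible range $\{k : k^2\pi^2/L_0^2 - E \geq E\}$, then convert the sums of Fourier coefficients back into the two-dimensional norms using Plancherel and Lemma \ref{lem:boundFG}.

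First, I would square the estimate \refeq{eq:controllm}. Using $(A+B+C)^2 \leq 3(A^2+B^2+C^2)$ one obtains, for each admissible $k$,
\[
\|u_k\|_{L^2(-B_0,b)}^2 \leq C\Bigl( b^{2\gamma-1}\|F_k\|_{L^2(0,b)}^2 + E^{-1}b^{2\gamma-3}\|G_k\|_{L^2(0,b)}^2 + b^{-1}\|u_k\|_{L^2(0,b)}^2\Bigr),
\]
with a constant still depending only on $b_0$. Summing over the admissible indices and bounding each of these partial sums by the corresponding full sum over $k\geq 1$, I would then apply Lemma \ref{lem:boundFG} to handle the first two terms, giving bounds $C\|\partial_x u\|_{L^2(W_b)}^2$ and $C\|\partial_y u\|_{L^2(W_b)}^2$ respectively, and the Plancherel-type bound \refeq{eq:Plancherel} applied on $W_b$ for the last term, giving $C\|u\|_{L^2(W_b)}^2$. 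Combined, this yields
\[
\sum_{k^2\pi^2/L_0^2-E\geq E} \|u_k\|_{L^2(-B_0,b)}^2 \leq C\Bigl( b^{2\gamma-1}\|\partial_x u\|_{L^2(W_b)}^2 + E^{-1}b^{2\gamma-3}\|\partial_y u\|_{L^2(W_b)}^2 + b^{-1}\|u\|_{L^2(W_b)}^2\Bigr).
\]

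Next I would convert the left-hand side into $\|u_+\|_{L^2(R)}^2$. On the rectangular part $R = [-B_0,0]\times[0,L_0]$ the profile $L(x)\equiv L_0$ is constant, so the functions $y\mapsto \sin(k\pi y/L_0)$ form an orthogonal system on $[0,L_0]$ with squared norm $L_0/2$. Applying Parseval on each vertical slice and integrating in $x$ over $[-B_0,0]$ gives exactly
\[
\|u_+\|_{L^2(R)}^2 = \frac{L_0}{2}\sum_{k^2\pi^2/L_0^2-E\geq E} \|u_k\|_{L^2(-B_0,0)}^2 \leq \frac{L_0}{2}\sum_{k^2\pi^2/L_0^2-E\geq E} \|u_k\|_{L^2(-B_0,b)}^2.
\]
Combining with the previous bound delivers the stated inequality.

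The only subtlety (and the reason for the hypothesis $E>E_0$) is to guarantee that the admissible index set is well-defined and that the auxiliary quantities $E^{-1/2}b^{\gamma-1}$ appearing when Proposition \ref{prop:controllm} was itself derived remain uniformly bounded for $b\leq b_0$, so that no hidden $E$-dependence creeps into the constant; the smallness of $b_0$ is what makes the Taylor expansion \refeq{eq:Lnear0} of $L$ usable throughout. Beyond this bookkeeping the argument is essentially a one-line orthogonality computation on top of the per-mode estimate, so I do not anticipate a real obstacle here — the substantive work was already done in Proposition \ref{prop:controllm} and Lemmas \ref{lem:Hminus1bd}, \ref{lem:boundFG}.
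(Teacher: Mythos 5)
Your argument is correct and matches the paper's proof, which likewise squares the per-mode estimate \refeq{eq:controllm}, sums over the admissible $k$, and invokes \refeq{eq:Plancherel} together with Lemma \ref{lem:boundFG}; your explicit Parseval identity on $R$ (where $L\equiv L_0$) is just the spelled-out version of the paper's use of \refeq{eq:Plancherel}. No gaps.
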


\begin{proof}
We square estimate \refeq{eq:controllm}, sum over $k,$ and use \refeq{eq:Plancherel} and Lemma \ref{lem:boundFG}. 
\end{proof}

Observe that the controlling term in the preceding estimate is supported in the wing region. However, 
compared to the usual bounds (such as in \cite{BHW}) there is a loss of derivatives since 
we need $\partial_x u$ and $\partial_y u$ in the wings. 

We also obtain the following corollary.

\begin{cor}\label{cor:estuplus}
Let $b_0$ and $E_0$ be fixed. there exists $C$ depending on the billiard $b_0$ and $E_0$ but not 
on the eigenfunction nor on $b<b_0$ such that 
\[
\| u_+\|_{L^2(R)}^2 \,\leq C \left[ (b^{2\gamma-1}E\,+\,b^{2\gamma-3}) \| u \|^2_{L^2(\Omega)} \,+\, b^{-1}\|u\|^2_{L^2(W)}  \right].
\]
\end{cor}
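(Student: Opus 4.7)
The plan is to combine Proposition \ref{prop:estuplus} with the a priori energy identity satisfied by the eigenfunction $u$, and then with the trivial spatial inclusions $W_b \subset W \subset \Omega$. So, starting from the bound of Proposition \ref{prop:estuplus}, the only work is to replace the three terms on the right hand side by quantities involving only $\|u\|_{L^2(\Omega)}$ and $\|u\|_{L^2(W)}$.

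First I would record the global energy identity. Since $u$ is a Dirichlet eigenfunction of energy $E$, an integration by parts in $\Omega$ yields
\[
q(u) \,=\, \int_\Omega |\nabla u|^2\, dx\,dy \,=\, E\,\|u\|_{L^2(\Omega)}^2.
\]
In particular, each of $\|\partial_x u\|_{L^2(\Omega)}^2$ and $\|\partial_y u\|_{L^2(\Omega)}^2$ is bounded by $E\,\|u\|_{L^2(\Omega)}^2$. Because $W_b \subset \Omega$, the same bound holds with the $L^2(W_b)$-norm on the left.

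Next I would substitute this into Proposition \ref{prop:estuplus}. The term $b^{2\gamma-1}\|\partial_x u\|^2_{L^2(W_b)}$ becomes at most $b^{2\gamma-1}E\,\|u\|^2_{L^2(\Omega)}$, and the term $E^{-1} b^{2\gamma-3}\|\partial_y u\|^2_{L^2(W_b)}$ becomes at most $b^{2\gamma-3}\|u\|^2_{L^2(\Omega)}$, with the factor of $E$ cancelling against $E^{-1}$. For the remaining term I simply use $W_b \subset W$ to write $b^{-1}\|u\|^2_{L^2(W_b)} \leq b^{-1}\|u\|^2_{L^2(W)}$. Summing these three estimates reproduces the right hand side of the corollary, with a constant that depends only on the billiard, $b_0$ and $E_0$, as required.

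There is really no substantive obstacle here: the corollary is an immediate consequence of Proposition \ref{prop:estuplus} once one notes that for genuine eigenfunctions (as opposed to quasimodes, cf.\ Remark \ref{rk:quasi}) the Dirichlet energy is exactly $E\,\|u\|_{L^2(\Omega)}^2$, so the gradient norms in the wing can be traded against a factor of $E$ times a global $L^2$ norm. The only mild point is that doing so loses all the gain from restricting to $W_b$ in the gradient terms; this is why the final bound still carries the $(b^{2\gamma-1}E + b^{2\gamma-3})\|u\|^2_{L^2(\Omega)}$ contribution, which will later have to be optimized against $b^{-1}\|u\|^2_{L^2(W)}$ in the subsequent sections to produce a non-concentration estimate.
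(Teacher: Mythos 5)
Your proof is correct and is essentially the paper's own argument: bound $\|\partial_x u\|^2_{L^2(W_b)}$ and $\|\partial_y u\|^2_{L^2(W_b)}$ by $E\|u\|^2_{L^2(\Omega)}$ via the Dirichlet energy identity, and use $W_b\subset W$ for the last term. Nothing further is needed.
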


\begin{proof}
We bound $\| \partial_x u\|^2_{L^2(W_b)}$ and $\| \partial_y u \|^2_{L^2(W_b)}$ by $E\|u\|^2_{L^2(\Omega)}$ and use 
the fact that the norm over $W_b$ is less than the norm over $W.$  
\end{proof}

It remains to choose $b$ in a clever way to obtain the desired bound. 
\subsubsection{Optimizing $b$}\label{sec:opt1}
We will choose $b$ to be of the form $M^{-1}E^{-\alpha}$ for some constants $M$ and $\alpha$ to be 
choosen. As long as $\alpha$ is positive, there is some large $E_0$ such that for any 
$E\geq E_0$ then $b= ME^{-\alpha}<b_0$ so that we can use the preceding proposition. 

We obtain 
\begin{equation}\label{eq:optuplus}
\| u_+\|_{L^2(R)}^2 \,\leq C \left[ (M^{1-2\gamma}E^{1-\alpha(2\gamma-1)}\,+\,E^{-\alp (2\gamma-3)}) \| u\|^2_{L^2(\Omega)}\,
+\,ME^{\alpha} 
\|u\|_{L^2(W)}^2\right].
\end{equation}
 
It remains to make good choices to obtain the following proposition. 

\begin{prop}\label{prop:lmfinal}
There exists $E_0$ and $C$ depending only on the billiard such that for any $u$ 
eigenfunction with energy $E>E_0$ the following holds :
\begin{equation}\label{eq:lmfinal}
\|u_+\|^2_{L^2(R)}\,\leq\, \frac{1}{4} \|u\|^2_{L^2(\Omega)}\,+\,
C E^{\frac{1}{2\gamma-1}} \|u\|^2_{L^2(W)}         
\end{equation}
\end{prop}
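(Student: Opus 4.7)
The plan is to insert $b = M^{-1}E^{-\alpha}$ into estimate \refeq{eq:optuplus} and choose the two free parameters $M$ and $\alpha$ so that the coefficient of $\|u\|_{L^2(\Omega)}^2$ on the right-hand side drops below $\frac{1}{4}$, while the coefficient of $\|u\|_{L^2(W)}^2$ is comparable to $E^{1/(2\gamma-1)}$. The bulk term carries two competing contributions, $M^{1-2\gamma}E^{1-\alpha(2\gamma-1)}$ and $E^{-\alpha(2\gamma-3)}$, so the strategy is to pick $\alpha$ to kill the positive power of $E$ in the first one, and then pick $M$ and $E_0$ to absorb the remaining constants.

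Setting $\alpha = \frac{1}{2\gamma-1}$ gives $1-\alpha(2\gamma-1)=0$, and reduces \refeq{eq:optuplus} to
\[
\|u_+\|_{L^2(R)}^2 \,\leq\, C\left[\bigl(M^{1-2\gamma} + E^{-\frac{2\gamma-3}{2\gamma-1}}\bigr)\|u\|_{L^2(\Omega)}^2 \,+\, M E^{\frac{1}{2\gamma-1}}\|u\|_{L^2(W)}^2\right].
\]
Because $1-2\gamma<0$, the first bulk contribution $CM^{1-2\gamma}$ is independent of $E$ and can be made smaller than $\frac{1}{8}$ by choosing $M$ sufficiently large. Because $2\gamma-3\geq 0$ (strictly when $\gamma>3/2$), the second bulk contribution $CE^{-(2\gamma-3)/(2\gamma-1)}$ can then be made smaller than $\frac{1}{8}$ by choosing $E_0$ sufficiently large. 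In the same step I would also require $E_0$ large enough that $M^{-1}E^{-1/(2\gamma-1)}\leq b_0$ for all $E\geq E_0$, so that Corollary~\ref{cor:estuplus} applies throughout.

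With those choices, the coefficient of $\|u\|_{L^2(\Omega)}^2$ is at most $\frac{1}{4}$ and the coefficient of $\|u\|_{L^2(W)}^2$ equals $CM E^{1/(2\gamma-1)}$, so absorbing $CM$ into a new constant yields \refeq{eq:lmfinal}. The main conceptual obstacle is the choice of $\alpha$: any smaller $\alpha$ leaves a positive power of $E$ in the bulk prefactor and the argument fails, while any larger $\alpha$ inflates the wing prefactor and worsens the exponent $1/(2\gamma-1)$. Hence $\alpha = 1/(2\gamma-1)$ is uniquely dictated by optimality, and the only delicate case is the borderline geometry $\gamma=3/2$ where $E^{-(2\gamma-3)/(2\gamma-1)}=1$ — there one would need a sharper bookkeeping of the $M$-dependence in the $b^{2\gamma-3}$-term of Corollary~\ref{cor:estuplus}, but the strict regime $\gamma>3/2$ goes through directly.
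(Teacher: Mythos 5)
Your proposal is correct and follows essentially the same route as the paper's own proof: the same choice $\alpha=\frac{1}{2\gamma-1}$ to remove the power of $E$ from the $b^{2\gamma-1}E$ term, $M$ chosen so that $CM^{1-2\gamma}\leq \frac18$, and $E_0$ taken large enough that $CE^{-\alpha(2\gamma-3)}\leq\frac18$ and $b=M^{-1}E^{-\alpha}\leq b_0$. Your caveat about the borderline case $\gamma=\frac32$ is well observed, but it applies verbatim to the paper's argument as well (which likewise asserts that $E^{-\alpha(2\gamma-3)}$ tends to zero), so it is not a defect of your proposal relative to the paper.
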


\begin{proof}
We choose $\alpha:= \frac{1}{2\gamma-1}$ and $M$ such that $CM^{1-2\gamma}\,=\,\frac{1}{8}$.  For $E$ large enough, 
$E^{-\alp(2\gamma-3)}$ goes to zero. It is thus bounded by $\frac{1}{8C}$ for $E$ large enough. 
Replacing in \refeq{eq:optuplus} we get :
\[
\| u_+\|^2_{L^2(R)}\,\leq\, \frac{1}{4} \|u\|^2_{L^2(\Omega)}\,+\,
CE^{\frac{1}{2\gamma-1}}
\|u\|_{L^2(W)}^2.
\]
The claim follows.
\end{proof}

\subsection{Small modes} 

We now consider modes for which $\frac{k^2\pi^2}{L_0^2}-E \leq E,$ 
and this time we rewrite the equation 
$P_k(u_k)=\Lambda_k$ in the following form:
\begin{equation}\label{eq:Schrosm}
- u_k'' \,-\,z_k u_k \,=\,h_k,  
\end{equation}
in which we have set $z_k\,:= E-\frac{k^2\pi^2}{L_0^2}$ and 
\[
h_k :=\,\tilde{\Lambda}_k\,+\, \frac{L'}{L}{u}'_k + 
\frac{k^2}{\pi^2}\left (\frac{1}{L_0^2}-\frac{1}{L^2}\right ) u_k. 
\]

\subsubsection{The control estimate}
Since $z_k\geq -E$ we can use the results of the appendix 
to control $\|u_k\|_{L^2(-B_0,b)}.$

To do so, we need to estimate the norm of $h_k$ in $H^{-1}(-B_0,b)$.  

\begin{lem}\label{lem:estnormhsm}
There exists some constant $C$ depending only on $b_0$ such that, 
for any $b\leq b_0$ and  any $k$ such that $\frac{k^2\pi^2}{L_0^2}-E \leq E,$ 
the following holds :
\begin{eqnarray}
  \label{eq:estnormhsm}
  \| h_k\|_{H^{-1}(-B_0,b)}&\leq & C \left(  k b^{\gamma} \| F_k \|_{L^2 (0,b)} + b^{\gamma-1} \| G_k \|_{L^2 (0,b)} \right.\\ 
\nonumber & & +\left . \,(b^{\gamma-1}\,+\,k^2b^{\gamma+1})  \|u_k\|_{L^2(0,b)}\right).
\end{eqnarray}
\end{lem}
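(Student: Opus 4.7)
The plan is to bound the three pieces of $h_k = \tilde\Lambda_k + (L'/L)u_k' + k^2\pi^2(1/L_0^2 - 1/L^2)u_k$ separately (I believe the $k^2/\pi^2$ written in the display is a typo; the identity $k^2\pi^2/L^2 - k^2\pi^2/L_0^2$ is what enters when one passes from $k^2\pi^2/L^2 - E$ to $-z_k$). The first two contributions are handled in exactly the same way as in the large-mode Lemma \ref{lem:normhklm}. Namely, by Remark \ref{rk:prefactor} the norm of $\tilde\Lambda_k$ is controlled uniformly by that of $\Lambda_k$, and Lemma \ref{lem:Hminus1bd} supplies
\[
\|\tilde\Lambda_k\|_{H^{-1}(-B_0,b)} \leq C\bigl(kb^\gamma\|F_k\|_{L^2(0,b)} + b^{\gamma-1}\|G_k\|_{L^2(0,b)}\bigr).
\]
For the drift term, one writes $(L'/L)u_k' = ((L'/L)u_k)' - (L''/L - (L')^2/L^2)u_k$, integrates each piece against a test function $\phi$ by parts (the boundary terms vanish since $\phi\in\D_b$), and uses $|L'(x)/L(x)| \leq Cx_+^{\gamma-1}$ together with $|L''/L - (L')^2/L^2| \leq Cx_+^{\gamma-2}$ exactly as in the proof of Lemma \ref{lem:normhklm}. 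Two applications of Cauchy--Schwarz plus \refeq{eqn:H1equiv} yield $\|(L'/L)u_k'\|_{H^{-1}} \leq Cb^{\gamma-1}\|u_k\|_{L^2(0,b)}$.

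The only new ingredient is the potential-difference term. Set $V(x) := 1_{x>0}\bigl(1/L_0^2 - 1/L^2(x)\bigr)$; by the expansion \refeq{eq:Lnear0} we have $|V(x)| \leq Cx_+^\gamma$. For $\phi\in\D_b$, integrate by parts once:
\[
\int_{-B_0}^b V(x)u_k(x)\phi(x)\,dx = -\int_{-B_0}^b \Phi(x)\phi'(x)\,dx, \qquad \Phi(x) := \int_{-B_0}^x V(\xi)u_k(\xi)\,d\xi,
\]
with no boundary contribution. Since $V\equiv 0$ on $(-B_0,0]$, $\Phi$ is supported in $(0,b)$, and for $x\in(0,b)$ Cauchy--Schwarz gives
\[
|\Phi(x)|^2 \leq \|V\|_{L^2(0,x)}^2\,\|u_k\|_{L^2(0,x)}^2 \leq C x^{2\gamma+1}\|u_k\|_{L^2(0,b)}^2,
\]
so that $\|\Phi\|_{L^2(-B_0,b)}^2 \leq C b^{2\gamma+2}\|u_k\|_{L^2(0,b)}^2$. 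A final Cauchy--Schwarz in the integral above plus \refeq{eqn:H1equiv} then yields
\[
\bigl\|k^2\pi^2 V u_k\bigr\|_{H^{-1}(-B_0,b)} \leq C k^2 b^{\gamma+1}\|u_k\|_{L^2(0,b)}.
\]

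Summing the three bounds gives \refeq{eq:estnormhsm}. The delicate point is producing the exponent $b^{\gamma+1}$ (and not the weaker $b^{\gamma+1/2}$ that a direct $L^\infty \cdot L^1$ or $L^2\cdot L^2$ estimate on $Vu_k\phi$ would yield): the trick is to first integrate by parts to move a derivative onto $\phi$, and then apply Cauchy--Schwarz \emph{pointwise} to the antiderivative $\Phi$, gaining an additional half-power of $x$ from the $L^2$-smallness of $V$ near the origin before integrating in $x$.
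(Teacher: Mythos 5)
Your proof is correct and follows essentially the same route as the paper: the same term-by-term decomposition of $h_k$, with $\tilde\Lambda_k$ handled via Remark \ref{rk:prefactor} and Lemma \ref{lem:Hminus1bd}, the drift term as in Lemma \ref{lem:normhklm}, and the potential-difference term by one integration by parts followed by two applications of Cauchy--Schwarz, which is exactly the paper's (terser) argument for $I_3$; your identification of $\frac{k^2}{\pi^2}$ as a typo for $k^2\pi^2$ is also right and harmless. The only difference is that you spell out the pointwise bound on the antiderivative $\Phi$ that the paper leaves implicit.
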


\begin{proof}
We use the definition of $h_k\,=\, \frac{2}{L(x)}\Lambda_k \,+\, \frac{L'}{L}{u}'_k + 
\frac{k^2}{\pi^2}\left (\frac{1}{L_0^2}-\frac{1}{L^2}\right ) u_k$ and estimate 
each term separately. 
The first term is estimated using \refeq{lem:Hminus1bd} and remark \ref{rk:prefactor}.
The second term is estimated as in the proof of lemma \ref{lem:normhklm}.

The same method applies to estimate the third term. 
We introduce   
\[
I_3\,=\,\left| \int_{-B_0}^b \left (\frac{1}{L_0^2}-\frac{1}{L^2(x)}\right ) u_k(x)\phi(x) \,dx\right|, 
\]
and observe that $(\frac{1}{L_0^2}-\frac{1}{L^2(x)})$ is $O(x_+^\gamma)$. 
Integrating by parts and using twice Cauchy-Schwarz inequality gives 
\[
I_3\,\leq \, C b^{\gamma+1} \| \phi' \|_{L^2 (0,b)} \|u_k\|_{L^2(0,b)} .
\]

Using the definition of the $H^{-1}$ norm (see \refeq{eqn:H1equiv}) and putting these estimates together 
yield the lemma.         
\end{proof}

For any $E\in \RR,$ denote by $\nu(E):=\min\left\{ \left| E - \frac{k^2\pi^2}{L_0^2}-\frac{l^2\pi^2}{B_0^2} \right|,~
(k,l)\in \NN \times \NN \right \}.$ 

\begin{rem}
Observe that $\nu(E) \leq \min\left\{ \left| E - \frac{k^2\pi^2}{L_0^2}-\frac{\pi^2}{B_0^2} \right|,~
k \in \NN \right \},$ so for $E$ large, we have
\begin{eqnarray}
\label{eqn:nubound}
 \nu(E) < c \sqrt{E}
 \end{eqnarray}
for some constant $c$.
\end{rem}

\begin{lem}\label{lem:estsin}
For any $\beta>0$, there exists some $c$ such that the following holds. 
For any $k$ such that $z_k\,=\,E-\frac{k^2\pi^2}{L_0^2}\,\geq \beta^2$
\[
|\sin(B_0\sqrt{z_k})|\geq c \cdot \frac{\nu(E)}{\sqrt{z_k}}.
\]
\end{lem}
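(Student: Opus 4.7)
The plan is to reduce the lower bound on $|\sin(B_0\sqrt{z_k})|$ to a distance-from-$\pi\ZZ$ computation and then express that distance in terms of the quantity controlled by $\nu(E)$. First I would let $l_0\geq 0$ be the integer minimizing $|B_0\sqrt{z_k}-l_0\pi|$, so that this distance is at most $\pi/2$. The elementary bound $|\sin t|\geq (2/\pi)\,\mathrm{dist}(t,\pi\ZZ)$ then yields
\[
|\sin(B_0\sqrt{z_k})|\,\geq\,\frac{2}{\pi}|B_0\sqrt{z_k}-l_0\pi|.
\]
Rationalizing by multiplying and dividing by $\sqrt{z_k}+l_0\pi/B_0$, the right hand side becomes
\[
\frac{2 B_0}{\pi}\cdot\frac{|z_k-l_0^2\pi^2/B_0^2|}{\sqrt{z_k}+l_0\pi/B_0},
\]
which is the expression I will compare to $\nu(E)/\sqrt{z_k}$.

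The main case is $l_0\geq 1$, where the numerator equals $|E-k^2\pi^2/L_0^2-l_0^2\pi^2/B_0^2|$ and is thus bounded below by $\nu(E)$ directly from the definition. For the denominator, the choice of $l_0$ gives $l_0\pi/B_0\leq\sqrt{z_k}+\pi/(2B_0)$, and the hypothesis $\sqrt{z_k}\geq\beta$ allows me to absorb the constant $\pi/(2B_0)$ into $(\pi/(2B_0\beta))\sqrt{z_k}$, yielding $\sqrt{z_k}+l_0\pi/B_0\leq C_\beta\sqrt{z_k}$ for some $C_\beta$ depending only on $B_0$ and $\beta$. This is the step where the dependence on $\beta$ enters, and it gives the claim with $c=2B_0/(\pi C_\beta)$ in this regime.

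The remaining case $l_0=0$ forces $B_0\sqrt{z_k}\leq\pi/2$, so $z_k$ lies in the bounded range $[\beta^2,\pi^2/(4B_0^2)]$. Here $\sin t\geq (2/\pi)t$ on $[0,\pi/2]$ directly gives $|\sin(B_0\sqrt{z_k})|\geq (2B_0/\pi)\sqrt{z_k}\geq (2B_0\beta^2/\pi)/\sqrt{z_k}$, using $z_k\geq\beta^2$ for the last step. Meanwhile, taking the test pair $(k,1)$ in the definition of $\nu$ shows $\nu(E)\leq|z_k-\pi^2/B_0^2|\leq\pi^2/B_0^2$, a constant depending only on $B_0$. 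The claim then follows by shrinking $c$ further, again as a function of $B_0$ and $\beta$. I do not anticipate a genuine obstacle: the proof is essentially a rationalization trick, and the only real care is in tracking how the constant degrades as $\beta\to 0$ through the comparison $\sqrt{z_k}+l_0\pi/B_0\lesssim_\beta\sqrt{z_k}$.
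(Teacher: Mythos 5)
Your proof is correct and takes essentially the same route as the paper: bound $|\sin(B_0\sqrt{z_k})|$ below by the distance of $B_0\sqrt{z_k}$ to $\pi\ZZ$, rationalize to produce $|z_k-l^2\pi^2/B_0^2|\geq\nu(E)$ in the numerator, and control the denominator $\sqrt{z_k}+l\pi/B_0$ by a constant times $\sqrt{z_k}$ (the paper does this last step through a small auxiliary lemma valid for all $\lambda\geq 0$, while you use $\sqrt{z_k}\geq\beta$ directly, which is equally fine since the constant may depend on $\beta$). Your separate treatment of the case $l_0=0$, where $(k,l_0)$ is not an admissible index pair in the definition of $\nu(E)$, is a point the paper's appeal to ``the definition of $\nu(E)$'' glosses over, so that extra case analysis is a welcome refinement rather than a deviation.
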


\begin{proof}
First we use that there exists some $c$ such that 
\[
\forall x\in\RR,~~~~\left| \sin x\,\right|\geq c\cdot \mathrm{dist}(x,\pi\ZZ).
\]
We denote by $l_k$ the integer such that 
\[
\mathrm{dist}\left(\sqrt{z_k},\frac{\pi}{B_0}\ZZ\right )\,=\,\left|\sqrt{z_k}-\frac{l_k\pi}{B_0}\right |,
\]
so that we have 
\begin{eqnarray*}
\left|\sin(B_0\sqrt{z_k})\right|&\geq &c \cdot \left|\sqrt{z_k}-\frac{l_k\pi}{B_0}\right | \\
&\geq& c \cdot \frac{z_k-\frac{l_k^2\pi^2}{B_0^2}}{\sqrt{z_k}+\frac{l_k\pi}{B_0}}\\
&\geq & c \cdot \frac{E-\frac{k^2\pi^2}{L_0^2}-\frac{l_k^2\pi^2}{B_0^2}}{\sqrt{z_k}},
\end{eqnarray*} 
where, for the last bound, we have used the lemma \ref{lem:distla} below.

The claim follows by definition of $\nu(E).$
\end{proof}

\begin{lem}\label{lem:distla}
Fix $\alpha>0$ and denote by $l$ the (step-like) function on $[0,\infty)$ that is defined 
by 
\[
|\lambda-l(\lambda)\alpha|\,=\, \mathrm{dist}(\lambda,\alpha\ZZ)
\]  
Then there exists some $C$ such that 
\[
\forall \lambda\in [0,\infty),~~ \lambda+l(\lambda)\alpha \leq C\lambda.
\] 
\end{lem}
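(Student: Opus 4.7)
The plan is to prove the bound with $C=3$ by a straightforward case analysis, using only the elementary fact that $l(\lambda)$ is (a choice of) the nearest integer to $\lambda/\alpha$.

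First, I would observe that from the definition we always have the inequality
\[
|l(\lambda)\alpha - \lambda|\,\leq\,\frac{\alpha}{2},
\]
since the nearest multiple of $\alpha$ to $\lambda$ is at distance at most $\alpha/2$. This will be the only quantitative input.

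Next, I would split the argument into two regimes. In the small regime $0\leq\lambda<\alpha/2$, the definition forces $l(\lambda)=0$, because the only nonnegative multiple of $\alpha$ within distance $\alpha/2$ of $\lambda$ is $0$ itself (and for $\lambda<0$ we do not care). Hence $\lambda+l(\lambda)\alpha=\lambda$, which is trivially bounded by $C\lambda$. In the large regime $\lambda\geq\alpha/2$, the bound above gives
\[
l(\lambda)\alpha\,\leq\,\lambda+\frac{\alpha}{2}\,\leq\,\lambda+\lambda\,=\,2\lambda,
\]
so that $\lambda+l(\lambda)\alpha\leq 3\lambda$. Combining the two regimes yields the claim with $C=3$.

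There is no genuine obstacle here; the statement is essentially the observation that the nearest-integer function grows linearly. The only small subtlety is the edge case $\lambda=\alpha/2$ (or more generally $\lambda\in (k+\tfrac{1}{2})\alpha$), where two integers realize the distance; but this is a measure-zero ambiguity that does not affect the bound, since both candidate values of $l(\lambda)$ still satisfy $|l(\lambda)\alpha-\lambda|\leq\alpha/2$.
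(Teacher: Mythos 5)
Your proof is correct. The two facts you use --- that $l(\lambda)=0$ on $[0,\alpha/2)$ and that $|l(\lambda)\alpha-\lambda|=\mathrm{dist}(\lambda,\alpha\ZZ)\le\alpha/2$ --- are exactly what is needed, and your split at $\lambda=\alpha/2$ gives the explicit constant $C=3$ (the half-integer ambiguity you mention is indeed harmless, since either admissible value of $l(\lambda)$ obeys the same distance bound). The paper proves the lemma by the same underlying observation but packaged differently: it introduces $f(\lambda)=\bigl(\lambda+l(\lambda)\alpha\bigr)/\lambda$, notes $f\equiv 1$ on $[0,\alpha/2]$, that $f$ tends to $2$ at infinity, and bounds $f$ on an intermediate compact interval $[\alpha/2,M]$ --- a three-regime, qualitative argument (limit plus boundedness on a compact set) that does not produce an explicit constant. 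Your two-case version is slightly cleaner and more quantitative; since only the existence of some $C$ is used downstream (in Lemma \ref{lem:estsin}), nothing is lost either way.
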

\begin{proof}
Define $f$ by $f(\lambda)\,=\,\frac{\lambda+l(\lambda)\alpha}{\lambda}.$ 
First, since $l$ vanishes on  $[0,\frac{\alpha}{2}],$ we have $f(\lambda)=1$ 
on this interval. Second, the function $f$ tends to the limit $2$ when 
$\lambda$ goes to infinity. Finally,  on $[\frac{\alpha}{2},M]$ we have 
$f(\lambda)\,=\,1+\frac{l(\lambda)}{\lambda} \alpha \leq 1+ \frac{2M+1}{\alpha}.$ 
\end{proof}

Putting these estimates together, we get the following
\begin{prop}
  \label{prop:estsm}
There exists $b_0$ and $E_0$ and a constant $C:=C(b_0,E_0)$ such that the following 
holds. For any $E>E_0,$ 
for any $k$ such that $\frac{k^2\pi^2}{L_0^2}-E\leq E$ and for any $b<b_0$, we have the following 
estimate
\begin{eqnarray}
  \label{eq:estsm}
  \| u_k\|_{L^2(-B_0,b)} & \leq & C \frac{E^\und}{\nu(E)}
\left[ E^\und b^{\gamma+\und}\| F_k \|_{L^2 (0,b)} + b^{\gamma-\und} \| G_k \|_{L^2 (0,b)} \right. \\
\nonumber && \left. +\left( 1\,+\,Eb^{\gamma+2}\right) b^{-\und}\| u_k \|_{L^2(0,b)}\right].
\end{eqnarray}
\end{prop}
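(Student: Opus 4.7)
The plan is to apply a one-dimensional control estimate from the appendix to the ODE $-u_k''-z_k u_k = h_k$ on $(-B_0,b)$ with the Dirichlet condition $u_k(-B_0)=0$, and then chain together the $H^{-1}$ bound on $h_k$ from Lemma~\ref{lem:estnormhsm} with the sine lower bound from Lemma~\ref{lem:estsin}.

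First, I would invoke the appendix's control estimate which, for $z_k\geq -E$, should produce an inequality of the form
\begin{equation*}
\|u_k\|_{L^2(-B_0,b)} \,\leq\, \frac{C}{|\sin(B_0\sqrt{z_k})|}\Bigl( b^{\und}\|h_k\|_{H^{-1}(-B_0,b)} + b^{-\und}\|u_k\|_{L^2(0,b)}\Bigr),
\end{equation*}
with the sine read as $i\sinh$ when $z_k<0$, a case in which the prefactor is automatically well-controlled. The factor $|\sin(B_0\sqrt{z_k})|^{-1}$ is then converted into $C\sqrt{E}/\nu(E)$ by combining Lemma~\ref{lem:estsin} (which gives $|\sin(B_0\sqrt{z_k})|^{-1}\leq C\sqrt{z_k}/\nu(E)$ for $z_k\geq \beta^2$) with $\sqrt{z_k}\leq \sqrt{E}$; for the exceptional window $-E\leq z_k\leq \beta^2$, either $|\sin|$ is bounded below by a fixed constant (away from the isolated resonance values) or $\sqrt{E}/\nu(E)\gtrsim 1$ holds trivially since $\nu(E)\leq c\sqrt{E}$.

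Next, I would substitute the bound from Lemma~\ref{lem:estnormhsm} and simplify using the small-mode hypothesis $k^2\pi^2/L_0^2 \leq 2E$, which yields $k\leq CE^{\und}$. Term by term: the $F_k$-contribution $kb^\gamma\cdot b^{\und}$ is bounded by $CE^{\und}b^{\gamma+\und}$; the $G_k$-contribution $b^{\gamma-1}\cdot b^{\und}$ gives $b^{\gamma-\und}$; and the $u_k$-contribution from the source, $(b^{\gamma-1}+k^2b^{\gamma+1})\cdot b^{\und}\leq b^{\gamma-\und}+CEb^{\gamma+\frac{3}{2}}$, combines with the native $b^{-\und}\|u_k\|_{L^2(0,b)}$ term---after absorbing $b^{\gamma-\und}$ into $b^{-\und}$ using $\gamma\geq 3/2$ and $b\leq b_0$---to give precisely $C(1+Eb^{\gamma+2})b^{-\und}\|u_k\|_{L^2(0,b)}$. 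Multiplying through by the $\sqrt{E}/\nu(E)$ prefactor reproduces \refeq{eq:estsm}.

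The main obstacle is to ensure that the appendix's control estimate has precisely this structure, namely a linear $b^{\und}$ weight on the $H^{-1}$-source and a $b^{-\und}$ weight on the observation window $(0,b)$, with constants uniform in $b\leq b_0$ and in $z_k$ across the full range $[-E,E]$ where the solution transitions from oscillatory to exponentially decaying. A secondary subtlety is aligning $|\sin(B_0\sqrt{z_k})|^{-1}$ with $\nu(E)$ near $z_k=0$, where both the sine factor and the resonance distance can degenerate; here Lemma~\ref{lem:distla} is what guarantees that the combined factor $\sqrt{z_k}/\nu(E)$ inherits the correct scaling, and the non-resonance hypothesis imposed later on $E$ keeps the prefactor $\sqrt{E}/\nu(E)$ polynomially controlled.
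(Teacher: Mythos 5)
Your proposal follows the paper's proof essentially verbatim: apply the appendix control estimate (Theorem \ref{thm:controluosc}) to \eqref{eq:Schrosm} with $u_k(-B_0)=0$ and $h_k$ supported in $(0,b)$, insert the bound of Lemma \ref{lem:estnormhsm}, convert the $|\sin(B_0\sqrt{z_k})|^{-1}$ factor via Lemma \ref{lem:estsin} together with $\sqrt{z_k}\leq\sqrt{E}$ and $k=O(\sqrt{E})$, and absorb the prefactor-free regimes into $E^{1/2}/\nu(E)$ using \eqref{eqn:nubound}. The only imprecision is your description of the window $z_k\leq\beta^2$: there the appendix bound \eqref{eq:boundvzs1} carries no sine (or sinh) prefactor at all, so the mechanism you give as the second alternative, namely $\sqrt{E}/\nu(E)\gtrsim 1$, is exactly what the paper uses, and no lower bound on the sine away from resonances is needed.
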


\begin{proof}
For any $k$ we let $z_k= E-\frac{k^2\pi^2}{L_0^2}$ and use the estimates of 
the appendix combined with the bound on $h_k$ given by Lemma \ref{lem:estnormhsm}. 
For $k$ such that $z_k$ corresponds to estimates 
\refeq{eq:boundvzs1} and \refeq{eq:boundvzl1} of Theorem 
\ref{thm:controluosc} we obtain~:
\begin{eqnarray*}
\| u_k\|_{L^2(-B_0,b)}&\leq & C
\left[ b^{\und}\| h_k \|_{H^{-1}(-B_0,b)}\,+\,b^{-\und}\| u_k\|_{L^2(0,b)}\right ]\\
  & \leq &C
\left[ k b^{\gamma+\und} \| F_k \|_{L^2 (0,b)} + b^{\gamma-\und} \| G_k \|_{L^2 (0,b)} \right. \\
&& \left. +\left(b^{\gamma} +  k^2b^{\gamma+2} +1 \right) b^{-\und}\| u_k\|_{L^2(0,b)}\right] . \\
\end{eqnarray*}
We now use that $k = O(E^{\und})$ in the regime we are considering. 
We also remark that $\left( b^{\gamma} +  k^2b^{\gamma+2} +1 \right ) = O(1+Eb^{\gamma+2}).$

Otherwise (i.e. for $k$ such that $z_k$ corresponds to estimate \refeq{eq:boundvzi1}), 
we have to add a global 
$|\sin(B_0\sqrt{z_k})|^{-1}$ prefactor. 
Using Lemma \ref{lem:estsin}, we have 
\[
 |\sin(B_0\sqrt{z_k})|^{-1}\,\leq\, C \frac{\sqrt{z_k}}{\nu(E)}\,\leq \,C \frac{E^{\und}}{\nu(E)}. 
\] 

We thus obtain that for any $k$ the following holds :
\begin{eqnarray*}
\| u_k\|_{L^2(-B_0,b)} & \leq & C\cdot\max \left(1,\frac{E^\und}{\nu(E)} \right) \left[ E^{\und}b^{\gamma+\und} \| F_k \|_{L^2 (0,b)}\right. \\
&&  \left. + b^{\gamma-\und} \| G_k \|_{L^2 (0,b)}  +\left( 1+ Eb^{\gamma+2} \right)b^{-\und} \| u\|_{L^2(0,b)}\right].
\end{eqnarray*}

Using \eqref{eqn:nubound}, for large $E$ we have $\frac{E^{1/2}}{\nu(E)}$ is bounded from below, so that the claim follows. 
\end{proof}

\subsection{Summing over $k$}
We use the estimates of the preceding sections to obtain a control on $\|u_-\|_{L^2(R)}^2$ 
in which we have set 
\[
u_-(x,y)\,=\,\sum_{\frac{k^2\pi^2}{L_0}-E\leq E} u_k (x) \sin \left( \frac{k \pi y}{L(x)}\right).
\]

We prove the following proposition.

\begin{prop}\label{prop:estuminus}
There exists $b_0$ and $E_0$ and a constant $C$ depending only on $E_0$ and $b_0$ 
such that if $u$ is an eigenfunction with energy $E\,>\,E_0$ and $b<b_0,$ 
then
\begin{eqnarray*}
\| u_-\|_{L^2(R)}^2 \,&\leq& C \frac{E}{\nu(E)^2}\left [ Eb^{2\gamma+1} \| \partial_x u \|^2_{L^2(W)} + b^{2\gamma-1} \| \partial_y u \|^2_{L^2(W)}\right.\\
& & \left.+ \left( 1+ Eb^{\gamma+2} \right)^2 b^{-1}\|u\|^2_{L^2(W)} \right ].
\end{eqnarray*}
\end{prop}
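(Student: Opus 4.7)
The plan is to simply square the per-mode estimate \refeq{eq:estsm} from Proposition~\ref{prop:estsm}, sum over all $k$ such that $\frac{k^2\pi^2}{L_0^2}-E\leq E$, and then convert the resulting one-dimensional sums into two-dimensional norms on $\Omega$ and $W$.

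More precisely, I would first apply the elementary inequality $(a+b+c)^2 \leq 3(a^2+b^2+c^2)$ to the right-hand side of \refeq{eq:estsm}. The common prefactor $C E^{\und}/\nu(E)$ squares to a global $C^2 E/\nu(E)^2$, which accounts for the overall coefficient in the target estimate. Summing the three resulting terms over $k$ in the small-mode range yields
\begin{align*}
\sum_{k^2\pi^2/L_0^2-E\leq E} \|u_k\|_{L^2(-B_0,b)}^2 \,\leq\, & C\,\frac{E}{\nu(E)^2}\Bigl[\, Eb^{2\gamma+1}\sum_k \|F_k\|_{L^2(0,b)}^2 \,+\,b^{2\gamma-1}\sum_k \|G_k\|_{L^2(0,b)}^2 \\
& \,+\,\bigl(1+Eb^{\gamma+2}\bigr)^2 b^{-1}\sum_k \|u_k\|_{L^2(0,b)}^2 \Bigr].
\end{align*}
On the left-hand side, the Plancherel-type equivalence \refeq{eq:Plancherel} converts the sum of $\|u_k\|_{L^2(-B_0,b)}^2$ into a constant times $\|u_-\|_{L^2(\Omega_b)}^2$, and since $R\subset \Omega_b$, this dominates $\|u_-\|_{L^2(R)}^2$. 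On the right-hand side, Lemma~\ref{lem:boundFG} bounds the sums over $F_k$ and $G_k$ by $C\|\partial_x u\|_{L^2(W_b)}^2$ and $C\|\partial_y u\|_{L^2(W_b)}^2$ respectively, while a further application of \refeq{eq:Plancherel} bounds $\sum_k \|u_k\|_{L^2(0,b)}^2$ by $C\|u\|_{L^2(W_b)}^2$. Finally, since $W_b\subset W$, all the $L^2(W_b)$ norms may be enlarged to $L^2(W)$ norms, yielding exactly the bound in the proposition.

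I do not expect any serious obstacle here: the proof is essentially bookkeeping, parallel in structure to the proof of Proposition~\ref{prop:estuplus}. The one minor point to check carefully is that the prefactor $E^{\und}/\nu(E)$ is genuinely uniform in $k$ in the range we are summing over, so that it can be pulled out of the sum — this is built into the statement of Proposition~\ref{prop:estsm} and is legitimate because the bound there already absorbed the $k$-dependence arising from $k = O(E^{\und})$ in the small-mode regime. No further cancellation or optimization is needed at this stage; the choice of $b$ will be made in the subsequent section when the bound for $\|u_-\|_{L^2(R)}^2$ and $\|u_+\|_{L^2(R)}^2$ are combined.
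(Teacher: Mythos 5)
Your proposal is correct and follows essentially the same route as the paper: square the per-mode estimate \refeq{eq:estsm}, sum over the small modes, use Lemma \ref{lem:boundFG} for the $F_k$ and $G_k$ sums and the Plancherel-type bound \refeq{eq:Plancherel} for the $u_k$ sums (and for passing from $\sum_k\|u_k\|^2_{L^2(-B_0,b)}$ to $\|u_-\|^2_{L^2(R)}$ on the left), then enlarge $W_b$ to $W$. The uniformity of the prefactor $E^{\und}/\nu(E)$ in $k$ is indeed already built into Proposition \ref{prop:estsm}, so no further argument is needed.
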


\begin{proof}
We square \refeq{eq:estsm} and sum with respect to $k$. 
The Lemma \ref{lem:boundFG} controls $\sum \|F_k\|^2$ and $\sum \|G_k\|^2.$ Plancherel 
formula takes care of $\sum \|u_k\|^2$. We also use as before that the norm over $W_b$ 
is smaller than the norm over $W$. 
\end{proof}

As for the large mode case, we get a corollary using the fact that $\|\partial_x u\|^2$ and $\| \partial_y u\|^2$ 
are bounded above by $E\|u\|^2_{L^2(\Omega)}$.

\begin{cor}
 \label{cor:estuminus}
There exists $b_0$ and $E_0$ and a constant $C$ depending only on $E_0$ and $b_0$ 
such that if $u$ is an eigenfunction with energy $E\,>\,E_0$ and $b<b_0,$ 
then
\begin{eqnarray*}
\| u_-\|_{L^2(R)}^2 \,&\leq& C \left [ \left(\frac{E^3}{\nu(E)^2}b^{2\gamma+1} + \frac{E^2}{\nu(E)^2}b^{2\gamma-1}\right) \| u \|^2_{L^2(\Omega)}\right.\\
& & \left.+ \left( 1+ Eb^{\gamma+2} \right)^2 \frac{E b^{-1}}{\nu(E)^2}\|u\|^2_{L^2(W)} \right ].
\end{eqnarray*}
\end{cor}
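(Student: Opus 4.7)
The plan is to derive Corollary \ref{cor:estuminus} as an almost immediate consequence of Proposition \ref{prop:estuminus} by replacing the wing--localized gradient norms with global bounds coming from the eigenvalue equation, exactly as was done in Corollary \ref{cor:estuplus} for the large--mode regime.

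First I would record the standard energy identity for a Dirichlet eigenfunction. Testing $\Delta u = Eu$ against $u$ and integrating by parts (using $u|_{\partial\Omega}=0$) gives
\[
\|\partial_x u\|_{L^2(\Omega)}^2 + \|\partial_y u\|_{L^2(\Omega)}^2 = q(u) = E\|u\|_{L^2(\Omega)}^2.
\]
In particular, since $W\subset \Omega$, one has the two one--sided bounds
\[
\|\partial_x u\|_{L^2(W)}^2 \leq E\|u\|_{L^2(\Omega)}^2, \qquad \|\partial_y u\|_{L^2(W)}^2 \leq E\|u\|_{L^2(\Omega)}^2.
\]

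Next I would simply plug these two inequalities into the right--hand side of Proposition \ref{prop:estuminus}. The term $Eb^{2\gamma+1}\|\partial_x u\|_{L^2(W)}^2$ is bounded by $E^2 b^{2\gamma+1}\|u\|_{L^2(\Omega)}^2$, and the term $b^{2\gamma-1}\|\partial_y u\|_{L^2(W)}^2$ is bounded by $E b^{2\gamma-1}\|u\|_{L^2(\Omega)}^2$. After multiplying by the overall $E/\nu(E)^2$ prefactor in Proposition \ref{prop:estuminus}, these two contributions combine to give the $\bigl(E^3 b^{2\gamma+1}/\nu(E)^2 + E^2 b^{2\gamma-1}/\nu(E)^2\bigr)\|u\|_{L^2(\Omega)}^2$ term announced in the corollary. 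The remaining term, involving $b^{-1}\|u\|_{L^2(W)}^2$ multiplied by $(1+Eb^{\gamma+2})^2$ and by $E/\nu(E)^2$, is left untouched since it already has the desired form.

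There is no real obstacle here: the corollary is a routine restatement of the proposition using the eigenfunction relation $q(u)=E\|u\|_{L^2(\Omega)}^2$ to trade derivative norms in the wings for undifferentiated $L^2(\Omega)$ mass. The only thing worth checking is that the constants $b_0$ and $E_0$ produced by Proposition \ref{prop:estuminus} are unchanged (they are, since the passage from $W_b$ to $W$ and the energy identity involve no further restrictions), and that the loss of a factor of $E$ from using the eigenfunction equation is absorbed into the $E^3/\nu(E)^2$ and $E^2/\nu(E)^2$ prefactors. This completes the argument.
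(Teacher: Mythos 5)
Your proposal is correct and follows essentially the same route as the paper: the paper derives the corollary from Proposition \ref{prop:estuminus} precisely by bounding $\|\partial_x u\|^2_{L^2(W)}$ and $\|\partial_y u\|^2_{L^2(W)}$ by $E\|u\|^2_{L^2(\Omega)}$ (the Dirichlet energy identity) and absorbing the extra factor of $E$ into the prefactors. Your bookkeeping of the resulting exponents and of the untouched $(1+Eb^{\gamma+2})^2\,Eb^{-1}/\nu(E)^2$ term matches the stated estimate.
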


\subsection{A non-resonance condition}
We now want to make the previous estimates explicit with respect to $E$ and $b$ so that 
we can use a similar optimization procedure as for the large modes case. We thus impose 
some condition on $\nu(E).$ Namely, for any $\eps \geq 0,$ we introduce the following set 

\begin{eqnarray*}
\Zcal_\eps&:=& \left\{ E\in \RR\,,|\, \nu(E) \geq c_0 E^{-\eps}\} \right .\\
&=& \left \{E\in \RR~|~\forall k,l\in \NN,~
|E-\frac{k^2\pi^2}{L_0^2}-\frac{l^2\pi^2}{B_0^2}|\geq c_0 E^{-\eps}\,\right\} .
\end{eqnarray*}

In other words, the set $\Zcal_{\eps}$ consists in energies that 
are far from the Dirichlet spectrum of the rectangle $[-B_0,0]\times[0,L_0].$ 
It is natural to say that such energies are {\em not resonating} with the 
rectangle. The coefficient $c_0$ which is irrelevent when $\eps>0$ has been chosen in 
such a way that Weyl's law for the rectangle implies that $\Zcal_0$ is not empty. 
Note however that, although expected, it is not clear that there actually are 
eigenvalues in $\Zcal_0,$ nor for that matter in $\Zcal_{\eps}$.
 
Once $\eps$ is fixed, the estimate of the corollary \ref{cor:estuminus} becomes :
\begin{equation}
\label{eq:estuminus}
\begin{split}
\| u_-\|_{L^2(R)}^2  \leq{} C\cdot \left[\right .&\left. \left( b^{2\gamma+1}E^{3+2\eps}\,+\,b^{2\gamma-1}E^{2+2\eps}\right)\| u\|^2_{L^2(\Omega)} \right. \\
& \left. + \left( 1+ Eb^{\gamma+2} \right)^2b^{-1}E^{1+2\eps} \|u\|_{L^2(W)}^2 \right] .
\end{split}
\end{equation}

\subsubsection{Optimizing $b$}\label{sec:opt2}

As before we let $b\,=\,ME^{-\alpha}$ for some positive $\alpha$ and try to optimize the 
bound.

\begin{prop}\label{prop:smfinal}
Define $\alp$ by 
\[
\alp= \max \left(\frac{3+2\eps}{2\gamma+1}, \frac{2+2\eps}{2\gamma-1}\right).
\]
There exists $E_0$ and $C$ such that for any $u$ 
eigenfunction with energy $E$ in $\Zcal_\eps$ such that $E>E_0,$ the following holds :
\begin{equation}\label{eq:smfinal}
\|u_-\|^2_{L^2(R)}\,\leq\, \frac{1}{4} \|u\|^2_{L^2(\Omega)}\,+\,
C\cdot E^{1+2\eps+\alp}\cdot \|u\|_{L^2(W)}^2    .     
\end{equation}
\end{prop}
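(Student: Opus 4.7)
The plan is to proceed exactly as in the large modes case (Proposition \ref{prop:lmfinal}), just with the more complicated estimate \eqref{eq:estuminus} as our starting point. Substituting $b = ME^{-\alpha}$ into \eqref{eq:estuminus}, the coefficient of $\|u\|^2_{L^2(\Omega)}$ becomes
\[
C \cdot M^{2\gamma+1} E^{3+2\eps -\alpha(2\gamma+1)} \,+\, C\cdot M^{2\gamma-1} E^{2+2\eps -\alpha(2\gamma-1)},
\]
and the coefficient of $\|u\|^2_{L^2(W)}$ becomes
\[
C\cdot M^{-1} \bigl(1+M^{\gamma+2}E^{1-\alpha(\gamma+2)}\bigr)^2 E^{1+2\eps+\alpha}.
\]

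First I would verify that the prescribed choice of $\alpha$ makes both $E$-exponents in the first bracket non-positive: by definition $\alpha \geq \frac{3+2\eps}{2\gamma+1}$ and $\alpha \geq \frac{2+2\eps}{2\gamma-1}$, so $-\alpha(2\gamma+1)+3+2\eps\leq 0$ and $-\alpha(2\gamma-1)+2+2\eps \leq 0$. Thus for $E\geq 1$ the first coefficient is bounded by $C(M^{2\gamma+1}+M^{2\gamma-1})$, and more importantly at least one of the two $E$-exponents is \emph{strictly} negative (the other being zero), so that for $E$ large enough the dominant $M$-power can be isolated. Next I would check that the factor $1+M^{\gamma+2}E^{1-\alpha(\gamma+2)}$ is bounded: since $\alpha \geq \frac{2+2\eps}{2\gamma-1}\geq \frac{2}{2\gamma-1} \geq \frac{1}{\gamma+2}$ (the last inequality being $2(\gamma+2)\geq 2\gamma-1$, i.e. $5\geq 0$), the exponent $1-\alpha(\gamma+2)$ is non-positive, so this factor is bounded by $1 + M^{\gamma+2}$.

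To conclude I would choose $M$ small enough so that $C(M^{2\gamma-1}+M^{2\gamma+1}) \leq \tfrac{1}{8}$, and then choose $E_0$ large enough so that (i) $ME_0^{-\alpha}< b_0$ (so that Corollary \ref{cor:estuminus} applies for all $E>E_0$), and (ii) the sub-dominant term in the $\|u\|^2_{L^2(\Omega)}$-coefficient (the one with strictly negative $E$-exponent) is at most $\tfrac{1}{8}$. With these choices the coefficient of $\|u\|^2_{L^2(\Omega)}$ becomes at most $\tfrac{1}{4}$ and the coefficient of $\|u\|^2_{L^2(W)}$ becomes $C\cdot E^{1+2\eps+\alpha}$ with $C$ depending only on the billiard and the (now fixed) $M$, yielding \eqref{eq:smfinal}.

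The main obstacle is really just the bookkeeping: checking the exponent inequalities that force the right-hand side of \eqref{eq:estuminus} to have the claimed polynomial growth. The substantive analytic content has already been packaged into the one-dimensional control estimate and into Corollary \ref{cor:estuminus}; here one just needs to balance the $\Omega$-term (which wants $b$ small) against the $W$-term (which wants $b$ large) and observe that the non-resonance condition $E\in\Zcal_\eps$ was used precisely to keep the $\nu(E)^{-2}$ factor under control as a power of $E$.
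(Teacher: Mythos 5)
Your proposal is correct and follows essentially the same route as the paper: substitute $b=ME^{-\alpha}$ into \eqref{eq:estuminus}, use the definition of $\alpha$ to make both $E$-exponents in the $\|u\|^2_{L^2(\Omega)}$-coefficient non-positive so a small choice of $M$ makes it at most $\tfrac14$, and note $\alpha\geq \tfrac{1}{\gamma+2}$ so that $(1+Eb^{\gamma+2})^2$ stays bounded (the paper derives this last inequality from the branch $\alpha\geq\tfrac{3}{2\gamma+1}$, you from $\alpha\geq\tfrac{2+2\eps}{2\gamma-1}$ — an immaterial difference).
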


\begin{proof}
With the given choice of $\alp$ it is possible to choose $M$ so that 
the prefactor of $\|u\|_{L^2(\Omega)}^2$ is $\frac{1}{4}$ for $E$ large enough. The claim follows remarking 
that the definition of $\alp$ implies $\alp \geq \frac{3}{2\gamma+1} \,>\, \frac{1}{\gamma+2}$ so that 
the prefactor $\left( 1+ Eb^{\gamma+2} \right)^2$ is uniformly bounded above.
\end{proof}

\section{Non-concentration Estimate}
We now put all the estimates together to obtain the following theorem.

\begin{thm}\label{thm:main}
Fix $\eps$, and define $\rho$ by 
\[
\rho:= \max \left (\frac{2+\gamma+2(\gamma+1)\eps}{2\gamma+1},\frac{1 + 2\gamma + 4\gamma\eps}{4\gamma-2}\right).
\]
There exists $E_0$ and $C$ such that any eigenfunction $u$ 
of $\Omega$ with energy $E$ in $\Zcal_{\eps}$ such that $E>E_0$ satisfies :
\[
\| u \|_{L^2(\Omega)}\leq C\cdot E^{\rho} \|u\|_{L^2(W)}.
\]
\end{thm}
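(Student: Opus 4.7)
The plan is to combine Propositions~\ref{prop:lmfinal} and~\ref{prop:smfinal} with orthogonality on the rectangle, then absorb the resulting $\|u\|^2_{L^2(\Omega)}$ term into the left-hand side. First I would observe that on the rectangular part $R=[-B_0,0]\times[0,L_0]$ we have $L(x)\equiv L_0$, so the family $\{y\mapsto \sin(k\pi y/L_0)\}_{k\in\NN}$ is orthogonal in $L^2(0,L_0)$. Consequently the decomposition $u=u_++u_-$, obtained by splitting the $k$-sum at the threshold $k^2\pi^2/L_0^2-E=E$, is orthogonal on $R$, giving
\[
\|u\|_{L^2(R)}^2 \;=\; \|u_+\|_{L^2(R)}^2 \,+\, \|u_-\|_{L^2(R)}^2.
\]

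Next, I would add the two estimates already proved. Proposition~\ref{prop:lmfinal} yields $\|u_+\|^2_{L^2(R)}\leq \tfrac14\|u\|^2_{L^2(\Omega)}+CE^{1/(2\gamma-1)}\|u\|^2_{L^2(W)}$, while Proposition~\ref{prop:smfinal} yields $\|u_-\|^2_{L^2(R)}\leq \tfrac14\|u\|^2_{L^2(\Omega)}+CE^{1+2\eps+\alp}\|u\|^2_{L^2(W)}$ for $E\in\Zcal_\eps$ with $E>E_0$, where $\alp=\max\!\left(\frac{3+2\eps}{2\gamma+1},\frac{2+2\eps}{2\gamma-1}\right)$. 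Summing and using the orthogonality above gives
\[
\|u\|_{L^2(R)}^2 \;\leq\; \tfrac12\|u\|^2_{L^2(\Omega)} \,+\, C\,E^{\beta}\,\|u\|^2_{L^2(W)},
\]
with $\beta=\max\!\left(\tfrac{1}{2\gamma-1},\,1+2\eps+\alp\right)$. Since $\Omega=R\sqcup W$ one has $\|u\|^2_{L^2(\Omega)}=\|u\|^2_{L^2(R)}+\|u\|^2_{L^2(W)}$, so substituting and rearranging yields
\[
\tfrac12\|u\|^2_{L^2(\Omega)} \;\leq\; (1+CE^{\beta})\,\|u\|^2_{L^2(W)},
\]
hence $\|u\|_{L^2(\Omega)}\leq C' E^{\beta/2}\|u\|_{L^2(W)}$ for $E$ large enough.

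It then remains to check that $\beta/2$ equals the exponent $\rho$ in the statement. In the first branch of $\alp$ one gets $1+2\eps+\alp=\frac{(1+2\eps)(2\gamma+1)+3+2\eps}{2\gamma+1}=\frac{2\gamma+4+4\eps(\gamma+1)}{2\gamma+1}$, whose half is $\frac{2+\gamma+2(\gamma+1)\eps}{2\gamma+1}$; in the second branch, $1+2\eps+\alp=\frac{(1+2\eps)(2\gamma-1)+2+2\eps}{2\gamma-1}=\frac{2\gamma+1+4\gamma\eps}{2\gamma-1}$, whose half is $\frac{1+2\gamma+4\gamma\eps}{4\gamma-2}$. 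Taking the maximum gives exactly $\rho$, and a quick comparison shows $\tfrac{1}{2(2\gamma-1)}$ from the large-mode side is always dominated by the second branch, so it does not contribute.

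The calculation is essentially bookkeeping; the only mild subtlety is to make sure I invoke the orthogonality of the Fourier expansion on $R$ (and not on $\Omega_b$, where $L$ varies) before summing, so that the two $L^2(R)$-estimates add cleanly. The genuine analytical work has already been done in the previous sections, so this final step amounts to assembling the pieces and checking the exponent.
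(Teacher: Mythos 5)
Your proposal is correct and follows essentially the same route as the paper: add Propositions~\ref{prop:lmfinal} and~\ref{prop:smfinal}, use $\|u\|^2_{L^2(\Omega)}=\|u\|^2_{L^2(R)}+\|u\|^2_{L^2(W)}$ to absorb the half of $\|u\|^2_{L^2(\Omega)}$, and compute $1+2\eps+\alp$ in both branches before taking square roots. Your explicit appeal to orthogonality of the sine basis on $R$ (where $L\equiv L_0$) and your check that the large-mode exponent $\tfrac{1}{2\gamma-1}$ is dominated are exactly the points the paper uses, stated a bit more explicitly.
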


\begin{proof}
We first remark that whatever the exponent $\alp$ is we always have 
$1\,+\,2\eps\,+\,\alp\geq 1 > \frac{1}{2\gamma-1}$ so that the exponent for the 
small modes is always larger than the exponent for the large modes. 
Thus, adding the estimates from propositions \ref{prop:lmfinal} and \ref{prop:smfinal}, 
we obtain 
\[
\| u\|_{L^2(R)}^2 \leq \frac{1}{2}\|u\|_{L^2(\Omega)}^2 \,+
\,CE^{1+2\eps+\alp}\|u\|_{L^2(W)}^2.
\]
Since $\|u\|^2_{L^2(R)}=\|u\|^2_{L^2(\Omega)}-\|u\|^2_{L^2(W)}$ we get 
\[
\frac{1}{2}\|u\|^2_{L^2(\Omega)} \leq 
(1+CE^{1+2\eps+\alp})\|u\|_{L^2(W)}^2 
\]
When $E$ is large the constant $1$ can be absorbed in the term with a power of $E$. 
The claim follows by computing $1+2\eps+\alp$ for both possible choices of $\alp$ 
and taking square roots.
\end{proof}

We state as a corollary the corresponding statement for the Bunimovich billiard 
(see theorem \ref{thm:intro}).
\begin{cor}
In the Bunimovich stadium, for any $\eps\geq 0$ there exists $E_0$ and $C$ such that 
if $u$ is an eigenfunction of energy $E$ in $\Zcal_{\eps}$ such that $E>E_0$ then the following estimate holds:
\[
\|u\|_{L^2(\Omega)}\,\leq C E^{\frac{5 + 8 \eps}{6}} \| u\|_{L^2(W)}.
\] 
\end{cor}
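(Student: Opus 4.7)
The plan is to specialize Theorem \ref{thm:main} to the Bunimovich geometry by identifying the value of the exponent $\gamma$ that appears in the asymptotic expansion \eqref{eq:Lnear0}. First I would check that the Bunimovich stadium is a billiard of the type considered in Section \ref{sec:defbill}: the rectangular part is $R = [-B_0,0]\times[0,L_0]$ and the cap is (after reflecting across $y=L_0/2$) described by a semicircle of radius $L_0/2$ centered at $(0,L_0/2)$. For $x\in(0,B_1)$ with $B_1=L_0/2$ this gives $L(x) = L_0/2 + \sqrt{(L_0/2)^2 - x^2}$. The function $L$ is smooth and non-increasing on $(0,B_1)$, satisfies $L'(B_1)=-\infty$, and a Taylor expansion near $0$ yields $L(x) = L_0 - L_0^{-1}x^2 + O(x^4)$ together with $L'(x) = -2L_0^{-1}x + O(x^3)$. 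This matches \eqref{eq:Lnear0} with $\gamma = 2$ and $c_L = 1/L_0$.

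With $\gamma = 2$ in hand, I would compute the exponent $\rho$ from Theorem \ref{thm:main}. The two candidates become
\[
\frac{2+\gamma+2(\gamma+1)\eps}{2\gamma+1}\Big|_{\gamma=2} = \frac{4+6\eps}{5},\qquad
\frac{1+2\gamma+4\gamma\eps}{4\gamma-2}\Big|_{\gamma=2} = \frac{5+8\eps}{6}.
\]
The comparison
\[
\frac{5+8\eps}{6} - \frac{4+6\eps}{5} = \frac{25+40\eps - 24 - 36\eps}{30} = \frac{1+4\eps}{30} \geq 0
\]
shows that the second candidate dominates for every $\eps \geq 0$, so $\rho = (5+8\eps)/6$. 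Applying Theorem \ref{thm:main} then yields constants $E_0$ and $C$ such that any eigenfunction with energy $E \in \Zcal_\eps$ and $E>E_0$ satisfies $\|u\|_{L^2(\Omega)} \leq C E^{(5+8\eps)/6}\|u\|_{L^2(W)}$, which is exactly the claim.

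No genuine obstacle is expected here: the corollary is an arithmetic specialization of the main theorem, and the only thing to check carefully is that the Bunimovich stadium indeed falls within the framework of Section \ref{sec:defbill}. The hypotheses that could require a small comment are the monotonicity of $L$, the negative limit of $L'$ at the endpoint, and the form of the near-$0$ expansion with $\gamma \geq 3/2$; all three are immediate from the explicit formula for $L(x)$ coming from the circular cap.
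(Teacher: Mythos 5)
Your argument is correct and is essentially the paper's own proof: one sets $\gamma=2$ in Theorem~\ref{thm:main}, obtains the two candidate exponents $\frac{4+6\eps}{5}$ and $\frac{5+8\eps}{6}$, and checks that the latter dominates for every $\eps\geq 0$. The only caveat is in your geometric preamble: the class of Section~\ref{sec:defbill} has a flat bottom $y=0$, so the semicircular cap profile you wrote does not literally describe the full stadium cross-section (the paper glosses over this too); what matters, and what your expansion correctly exhibits, is that the circular arc meets the straight boundary tangentially, giving $L(x)=L_0-c_Lx^2+o(x^2)$, i.e.\ $\gamma=2$.
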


\begin{proof}
We let $\gamma=2$ so that $\alp=\max( \frac{4+6\eps}{5}, \frac{5+8\eps}{6}).$ For any non-negative $\eps$, 
$\frac{4+6\eps}{5} \leq \frac{5+8\eps}{6},$ this makes the proof complete.
\end{proof}

\begin{rem}\label{rk:improvBHW}
The bounds in \cite{BHW} gives a similar control with $1$ as the exponent of $E.$ 
Our bound thus gives a better estimate as long as $ \eps < \frac{1}{8}$.
As it has been recalled in the introduction, it is quite natural that the non-resonance condition 
allows to get better bounds.
\end{rem}

\begin{rem}\label{rk:quasi}
We could deal with quasimodes by adding an error term to $\Lambda$ that 
is controlled by some negative power of $E.$ There will be mainly two differences in the analysis. 
First the second term $\Lambda$ will not have support away from the rectangle anymore and 
second, in the optimization process, we will have to take care of the new error term 
(which will possibly change the range of applicable exponents).
\end{rem}
 
\begin{rem}
By adding the estimates in propositions \ref{prop:estuplus} and \ref{prop:estuminus}, we get a different control estimate, 
where the control still is in the wings but now with a loss in derivatives. We haven't tried to optimize 
this bound.   
\end{rem}

\begin{appendix}
  
\section{One dimensional Control Estimates}
The aim of this appendix is to provide a control estimate for the equation 
\[
-u''-z\cdot u \,=\, h
\]
on $[-B_0,b]$ of the following type 
\[
\|u\|_{L^2(-B_0,0)}\,\leq\, C_1\|h\|_{H^{-1}(-B_0,b)}\,+\,C_2\|u\|_{L^2(0,b)},
\]
in which we want an explicit dependence of the constants $C_1$ and $C_2$ 
on $z$ and $b$.  It is now standard (see \cite{BZ3}) that if $b$ is fixed then we can choose 
$C_1$ and $C_2$ to be independent of $z$ but what we need is an estimate 
when $b$ goes to $0.$ 

We first need a few preparatory lemmas. 

\begin{lem}\label{lem:vpsmallz}
For any $\eps>0,$ there exists a constant $C:=C(\eps)$ such that 
for any $b,$ for any $h\in H^{-1}(-B_0,b)$ and 
any $z$ such that $z\leq \frac{(1-\eps)\pi^2}{b^2}$, there exists 
a solution $v_p\in H^1_0(0,b)$ to
\[
-v_p'' -z v_p \,=\,h,
\]
in $\Dcal'(0,b)$ 
and 
\begin{equation}
\label{eq:vpsmallz}
\| v_p \|_{L^2(0,b)}\,\leq\, C b \|h\|_{H^{-1}(-B_0,b)}  .
\end{equation}   
\end{lem}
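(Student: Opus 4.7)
The plan is to solve the equation variationally on $(0,b)$ with Dirichlet boundary conditions, exploiting the fact that $z$ sits strictly below the first Dirichlet eigenvalue $\pi^2/b^2$ of $-\partial^2$ on $(0,b)$. Concretely, introduce the bilinear form
\[
a(u,v) := \int_0^b u'(x)\,v'(x)\,dx\,-\,z\int_0^b u(x)\,v(x)\,dx
\]
on $H^1_0(0,b)$. By the one-dimensional Poincar\'e inequality, $\|\phi\|_{L^2(0,b)}\leq (b/\pi)\|\phi'\|_{L^2(0,b)}$ for $\phi\in H^1_0(0,b)$. Whenever $z\leq 0$ this gives $a(u,u)\geq \|u'\|_{L^2(0,b)}^2$; and when $0<z\leq(1-\eps)\pi^2/b^2$ we estimate
\[
a(u,u)\,\geq\,\|u'\|_{L^2(0,b)}^2\,-\,(1-\eps)\tfrac{\pi^2}{b^2}\|u\|_{L^2(0,b)}^2\,\geq\,\eps\,\|u'\|_{L^2(0,b)}^2.
\]
So in all cases $a$ is coercive on $H^1_0(0,b)$ with constant $\eps$ with respect to the $\|u'\|_{L^2}$-norm, which is equivalent to the $H^1_0$-norm by Poincar\'e.

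Next, I would view $h$ as an element of $H^{-1}(0,b)$ by restriction: any $\phi\in \CIc(0,b)$ extends by zero to an element of $\Dcal_b$, and this extension satisfies $\|\phi\|_{H^1(-B_0,b)}=\|\phi\|_{H^1(0,b)}$, so
\[
\|h\rest_{(0,b)}\|_{H^{-1}(0,b)}\,\leq\,\|h\|_{H^{-1}(-B_0,b)}.
\]
By Lax--Milgram, there is a unique $v_p\in H^1_0(0,b)$ with $a(v_p,\phi)=h(\phi)$ for every $\phi\in H^1_0(0,b)$; that is $-v_p''-z v_p=h$ in $\Dcal'(0,b)$.

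To get the quantitative bound, test the equation against $v_p$ itself. Extending $v_p$ by zero, still denoted $v_p$, one has $\|v_p\|_{H^1(-B_0,b)}=\|v_p\|_{H^1(0,b)}\leq \sqrt{1+b^2/\pi^2}\,\|v_p'\|_{L^2(0,b)}$ by Poincar\'e. Therefore
\[
\eps\,\|v_p'\|_{L^2(0,b)}^2\,\leq\,a(v_p,v_p)\,=\,h(v_p)\,\leq\,\|h\|_{H^{-1}(-B_0,b)}\,\sqrt{1+b^2/\pi^2}\,\|v_p'\|_{L^2(0,b)},
\]
which yields $\|v_p'\|_{L^2(0,b)}\leq \eps^{-1}\sqrt{1+b^2/\pi^2}\,\|h\|_{H^{-1}(-B_0,b)}$. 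Applying Poincar\'e one last time gives
\[
\|v_p\|_{L^2(0,b)}\,\leq\,\tfrac{b}{\pi}\,\|v_p'\|_{L^2(0,b)}\,\leq\,\tfrac{b}{\pi\eps}\sqrt{1+b^2/\pi^2}\,\|h\|_{H^{-1}(-B_0,b)},
\]
which is the desired estimate (the prefactor $\sqrt{1+b^2/\pi^2}$ is uniformly bounded since $b<B_1$, so the constant depends only on $\eps$).

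The only subtle point to watch is the matching of the two $H^{-1}$ norms: the hypothesis gives $h$ in the larger space $H^{-1}(-B_0,b)$ while the variational machinery naturally lives on $(0,b)$. This is harmless because any admissible test function on $(0,b)$ extends by zero without changing its $H^1$ norm, but it is what makes $v_p$ belong to $H^1_0(0,b)$ (and not just $H^1_0(-B_0,b)$), which is the key shape needed when we later glue $v_p$ with a homogeneous correction to control $u$ on the whole interval $(-B_0,b)$.
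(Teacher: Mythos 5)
Your argument is correct, and it reaches the estimate by a slightly different route than the paper. The paper also first restricts $h$ to $(0,b)$ (noting $\|h\|_{H^{-1}(0,b)}\leq\|h\|_{H^{-1}(-B_0,b)}$), but then proves the resolvent bound by expanding $v_p$ and $h$ in the Dirichlet sine basis $\sin(k\pi x/b)$ and bounding $\inf_{k\geq 1}\bigl(1-\tfrac{zb^2}{k^2\pi^2}\bigr)^2$ away from zero under the hypothesis $z\leq(1-\eps)\pi^2/b^2$; the explicit diagonalization makes the dependence of the constant on $\eps$ and the scaling in $b$ completely transparent. You instead run a Lax--Milgram argument: the hypothesis on $z$ together with the Poincar\'e inequality $\|\phi\|_{L^2(0,b)}\leq (b/\pi)\|\phi'\|_{L^2(0,b)}$ gives coercivity of $a(u,u)=\|u'\|^2-z\|u\|^2$ with constant $\eps$ (more precisely $\min(1,\eps)$ when $z\leq 0$, which is immaterial), and testing with $v_p$ plus one more application of Poincar\'e yields $\|v_p\|_{L^2(0,b)}\leq C(\eps)\,b\,\|h\|_{H^{-1}(-B_0,b)}$. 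Both proofs exploit exactly the same fact -- that $z$ sits a definite factor below the first Dirichlet eigenvalue $\pi^2/b^2$ of the short interval -- so the content is the same; your version packages existence and the quantitative bound in one stroke and works directly with the dual of the full $H^1$ norm used in the paper, while the paper's Fourier computation is more explicit about constants. The extra factor $\sqrt{1+b^2/\pi^2}$ you pick up is indeed harmless, since in every application $b$ is below a fixed $b_0$ (the paper's argument, made precise with its norm convention \eqref{eqn:H1equiv}, incurs the same factor), and you correctly emphasize that $v_p$ lies in $H^1_0(0,b)$, which is what is needed when it is later glued with the homogeneous solution $G$ in Proposition \ref{prop:controlvosc}.
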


\begin{proof}
First we note that $h,$ when restricted to $(0,b)$ also belongs 
to $H^{-1}(0,b)$ and that $\|h\|_{H^{-1}(0,b)}\leq \|h\|_{H^{-1}(-B_0,b)}$.
The proof follows from a standard resolvent estimate since, on $(0,b),$  the 
bottom of the spectrum of the self-adjoint operator $v \mapsto -v''$ with 
Dirichlet boundary condition is $\frac{\pi^2}{b^2}.$ 
We include it for the convenience of the reader. 
We decompose $v_p$ in Fourier series :
\[
v_p(x)\,=\,\sum_{k\geq 1} a_k\sin(\frac{k\pi}{b}  x).
\]
We have
\begin{eqnarray*}
h(x)\,=\,\sum_{k\geq 1} \left( \frac{k^2\pi^2}{b^2}-z \right) a_k\sin(\frac{k\pi}{b}  x),
\end{eqnarray*}
hence
\begin{eqnarray*}
\|h\|_{H^{-1}(0,b)}^2\,=\,\sum_{k\geq 1} 
\frac{\left(\frac{k^2\pi^2}{b^2}-z\right)^2}{\frac{k^2\pi^2}{b^2}}|a_k^2|
\end{eqnarray*}
or
\begin{eqnarray*}
\|h\|_{H^{-1}(0,b)}^2 & \geq & \pi^2 b^{-2}
 \left[\inf_{k\geq 1} \frac{(1-\frac{zb^2}{k^2 \pi^2})^2}{\pi^2 k^2} \right]  \|v_p\|_{H^1(0,b)}^2 \\
 & \geq & c\pi^2 b^{-2} \|v_p\|_{L^2 (0,b)}^2.
\end{eqnarray*}
The claim follows since the inf is bounded away from zero in the regime we are 
considering.
\end{proof}

\begin{lem}\label{lem:homsol}
For any $z\leq \frac{(1-\eps)\pi^2}{b^2}$ 
Let $w\in H^1_0(-B_0,b)$ be a solution to 
\[
-w''-z w =0
\]
in $\Dcal '\left( (-B_0,b)\setminus \{0\}\right).$
Then there exists a constant $A$ such that $w\,=A  G,$ 
in which the function $G$ is defined by:
\begin{equation}\label{eq:defG}\displaystyle
G(x)\,=\,\left \{
\begin{array}{cr}
\frac{\sin(\sqrt{z}(x+B_0))}{\sqrt{z}} \frac{\sin(\sqrt{z}b)}{\sqrt{z}},& x<0,\\
\\
\frac{\sin(\sqrt{z}(b-x))}{\sqrt{z}}  \frac{\sin(\sqrt{z}B_0)}{\sqrt{z}},& x>0.\\
\end{array}
\right.
\end{equation}
\end{lem}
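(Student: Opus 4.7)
The plan is to solve the ODE explicitly on each of the two subintervals $(-B_0,0)$ and $(0,b)$ separately, then match the two pieces using $H^1$ continuity at $0$.

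First I note that the $H^1_0$ hypothesis, combined with the one-dimensional Sobolev embedding $H^1 \hookrightarrow C^0$, yields $w \in C^0([-B_0,b])$ with $w(-B_0)=w(b)=0$. On each of the open intervals $(-B_0,0)$ and $(0,b)$, $w$ is a distributional solution of the linear ODE $w''+zw=0$, hence is smooth there by elliptic regularity (equivalently, by a direct bootstrap using the equation).

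To handle $z$ of arbitrary sign uniformly, I would introduce the entire function
\[
s_z(t):=\sum_{n\geq 0}\frac{(-1)^n z^n t^{2n+1}}{(2n+1)!},
\]
which coincides with $\sin(\sqrt{z}\,t)/\sqrt{z}$ for $z>0$, with $t$ for $z=0$, and with $\sinh(\sqrt{|z|}\,t)/\sqrt{|z|}$ for $z<0$. The two-dimensional solution space of $w''+zw=0$ is spanned by $s_z$ and $s_z'$, and the unique (up to scalar) solution vanishing at a prescribed point $\xi$ is proportional to $t\mapsto s_z(t-\xi)$. Applying this with $\xi=-B_0$ on the left and $\xi=b$ on the right gives constants $A_-,A_+\in\RR$ with $w(x)=A_-\,s_z(x+B_0)$ on $(-B_0,0)$ and $w(x)=A_+\,s_z(b-x)$ on $(0,b)$.

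Continuity of $w$ at $0$ then imposes the single linear relation $A_-\,s_z(B_0)=A_+\,s_z(b)$. The key point is that the hypothesis $z\leq (1-\eps)\pi^2/b^2$ guarantees $s_z(b)\neq 0$: for $z>0$ one has $\sqrt{z}\,b\in[0,\sqrt{1-\eps}\,\pi)$, so $\sin(\sqrt{z}\,b)>0$; for $z\leq 0$ the function $s_z$ is strictly positive on $(0,\infty)$. Hence the admissible pairs $(A_-,A_+)$ trace out a one-dimensional line, which I parameterize by a single scalar $A$ via
\[
A_- = A\,s_z(b), \qquad A_+ = A\,s_z(B_0).
\]
Substituting back into the two formulae for $w$ recovers precisely $w=AG$ with $G$ as in \refeq{eq:defG}.

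The only point that needs a moment of care is the degenerate case $s_z(B_0)=0$: the matching constraint then forces $A_+=0$ (so $w$ vanishes identically on $(0,b)$), while $A_-$ remains free through the nonvanishing factor $s_z(b)$, which is still consistent with $w=AG$ since $G$ then also vanishes on $(0,b)$. There is no genuine obstacle — the argument is essentially bookkeeping in linear ODE theory; the only thing to watch is keeping the expressions for the fundamental solutions well defined in $z$ uniformly across $z\leq 0$, $z=0$, and $0<z\leq(1-\eps)\pi^2/b^2$.
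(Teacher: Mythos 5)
Your proof is correct and follows essentially the same route as the paper: write $w$ on each side of $0$ as the (up to scalar) solution vanishing at the outer endpoint, match by continuity at $0$ coming from $H^1$, and use that $\frac{\sin(\sqrt z\, b)}{\sqrt z}\neq 0$ in the regime $z\leq \frac{(1-\eps)\pi^2}{b^2}$ to reduce the two constants to a single scalar $A$ with $w=AG$. Your extra care with the entire function $s_z$ (uniformity in the sign of $z$) and with the degenerate case $s_z(B_0)=0$ only makes explicit what the paper leaves implicit.
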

\begin{proof}
Let $w$ be such a function then necessarily there exist two constants $A_{\pm}$ such that 
\[
w(x)\,=\,\left \{
\begin{array}{cr}
A_- \frac{\sin(\sqrt{z}(x+B_0))}{\sqrt{z}}& x<0,\\
A_+ \frac{\sin(\sqrt{z}(b-x))}{\sqrt{z}}& x>0.\\
\end{array}
\right .
\]
By assumption $w \in H^1$ and hence is continuous at $0$, so
\[
A_- \frac{\sin(\sqrt{z}B_0)}{\sqrt{z}}\,=\,A_+ \frac{\sin(\sqrt{z}b)}{\sqrt{z}}.
\]
In the regime we are considering $\frac{\sin(\sqrt{z}b)}{\sqrt{z}}\neq 0$, hence
we can divide by this expression and express $A_-$ in terms of $A_+.$ 
The claim follows. 
\end{proof}

We finish these preparatory lemmas by establishing the control estimate 
for multiples of $G.$

\begin{lem}\hfill \\
  \label{lem:controlG}
\begin{enumerate}
\item 
For $\beta$ such that $0<\beta\leq \frac{\pi}{B_0},$ 
there exists $B_1\,=\,B_1(\beta)$ and $C:=C(\beta)$ such that 
for any $z\leq \beta^2$ and any $b<B_1$ the following estimate holds 
\begin{equation}
  \label{eq:controlGsm}
  \| G\|_{L^2(-B_0,0)}\,\leq\,C b^{-\und}  \|G\|_{L^2(0,b)}. 
\end{equation}
\item For any $\beta,\eps>0$ there exists $B_1:=B_1(\beta)$ and $C:=C(\beta,\eps)$ such that, for 
any $b\leq B_1$ and $\beta^2\leq z \leq \frac{(1-\eps)\pi^2}{b^2}$ the following estimate holds :
\begin{equation}
  \label{eq:controlGlm}
 \| G\|_{L^2(-B_0,0)}\,\leq\,C \frac{b^{-\und}}{\sin(\sqrt{z}B_0)} \|G\|_{L^2(0,b)}.
\end{equation}
\end{enumerate}
\end{lem}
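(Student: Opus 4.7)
The plan is to reduce both parts of the lemma to estimating a single explicit ratio. Using the formula for $G$ from Lemma \ref{lem:homsol}, a direct computation gives
\[
\|G\|_{L^2(-B_0,0)}^2 \,=\, \sigma(b)^2 \, J(B_0), \qquad \|G\|_{L^2(0,b)}^2 \,=\, \sigma(B_0)^2 \, J(b),
\]
where $\sigma(y) := \sin(\sqrt{z}\,y)/\sqrt{z}$ (interpreted analytically for $z<0$ as $\sinh(\sqrt{-z}\,y)/\sqrt{-z}$) and $J(a) := \int_0^a \sigma(y)^2\,dy$. The lemma therefore reduces to controlling
\[
\mathcal{R}(z,b) \,:=\, \frac{\sigma(b)^2 \, J(B_0)}{\sigma(B_0)^2 \, J(b)}.
\]

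The next step is an explicit evaluation of $J(a)$, which is an elementary integral of $\sin^2$ or $\sinh^2$; after some algebra one obtains
\[
\mathcal{R}(z,b) \,=\, \frac{B_0}{b} \cdot \frac{F(\sqrt{z}\,b)}{F(\sqrt{z}\,B_0)}, \qquad F(\xi) \,:=\, \frac{\xi \sin^2\xi}{\xi - \sin\xi\cos\xi},
\]
with $F$ extended to the imaginary axis by $F(i\tau) = \tau\sinh^2\tau/(\sinh\tau\cosh\tau - \tau)$. The whole analysis then reduces to four elementary properties of $F$: (i) $F(0^+) = 3/2$ via Taylor expansion; (ii) $F$ is continuous and strictly positive on $(0,\pi)$, vanishing at $\pi$; (iii) $F(i\tau) \sim \tau$ as $\tau\to\infty$; (iv) for any $\xi_0 > 0$, $F(\xi) \geq c(\xi_0)\sin^2\xi$ for all real $\xi \geq \xi_0$, which follows from rewriting the denominator as $\xi - \sin(2\xi)/2$ and using $|\sin(2\xi)|/2 \leq 1/2$.

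For part (1), one should take $\beta$ strictly less than $\pi/B_0$ (the endpoint is degenerate since $\sigma(B_0)$ vanishes when $\sqrt{z}B_0 = \pi$) and split according to whether $|\sqrt{z}\,b|$ stays bounded. When $|\sqrt{z}\,b| \leq 1$, $\sqrt{z}\,B_0$ lies either in $[0, \beta B_0]$ with $\beta B_0 < \pi$ or on the positive imaginary axis, so properties (ii) and (iii) combine to give $F(\sqrt{z}\,B_0) \geq c(\beta) > 0$, while $F(\sqrt{z}\,b)$ is bounded above; this yields $\mathcal{R} \leq C/b$. When $|\sqrt{z}\,b| > 1$, necessarily $z$ is very negative, and the asymptotic (iii) forces $F(\sqrt{z}\,b)/F(\sqrt{z}\,B_0) \sim b/B_0$, so $\mathcal{R}$ stays uniformly bounded, which is dominated by $C/b$ once $B_1$ is chosen small enough.

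For part (2), $\sqrt{z}\,b$ is real and confined to $[\beta b, \sqrt{1-\eps}\,\pi]$, bounded away from $\pi$, so $F(\sqrt{z}\,b)$ is uniformly bounded above. Since $\sqrt{z}\,B_0 \geq \beta B_0 > 0$, property (iv) gives $F(\sqrt{z}\,B_0) \geq c\sin^2(\sqrt{z}\,B_0)$, hence $\mathcal{R} \leq C/(b\sin^2(\sqrt{z}\,B_0))$ and the claimed bound follows by taking square roots. The main obstacle lies in the case of very negative $z$ in part (1), where both $F(\sqrt{z}\,b)$ and $F(\sqrt{z}\,B_0)$ blow up and the uniform estimate relies on the precise cancellation in their ratio captured by property (iii).
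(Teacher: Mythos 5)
Your proof is correct and follows essentially the same route as the paper's: both evaluate the two $L^2$ norms of $G$ explicitly and bound the resulting ratio of elementary trigonometric/hyperbolic expressions in $\sqrt{z}\,b$ and $\sqrt{z}\,B_0$ using continuity near $0$, positivity on a compact subinterval of $[0,\pi)$, and the hyperbolic asymptotics when $z$ is very negative (your $F(\xi)$ is, up to the factor $\xi/2$, the reciprocal of the paper's auxiliary function $F(X)=\int_0^X\sinh^2/\sinh^2(X)$, so the case splits match). Your remark that one needs $\sqrt{z}\,B_0$ bounded away from $\pi$ (i.e.\ $\beta<\pi/B_0$ in part (1)) is consistent with the paper, whose own proof likewise requires $\sin(\sqrt{z}B_0)/(\sqrt{z}B_0)$ bounded away from zero and which applies the lemma only for a chosen $\beta$.
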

\begin{proof}
In case $(1)$, we first assume that $z<-Z_0^2$ for some positive $Z_0.$ 
we set $z=-\omega^2$ and compute  
\begin{eqnarray*}
\int_{-B_0}^0 |G(x)|^2\,dx &=& \frac{\sinh^2(\omega b)}{\omega^2}\int_{-B_0}^0 \sinh^2(\omega(B_0+x))\,dx,\\
\int_{0}^b |G(x)|^2\,dx &=& \frac{\sinh^2(\omega B_0)}{\omega^2}\int_{0}^b \sinh^2(\omega(b-x))\,dx,\\
\end{eqnarray*}
By a straightforward change of variables we get 
\begin{eqnarray*}
\int_{-B_0}^0 |G(x)|^2\,dx &=& \frac{\sinh^2(\omega b)}{\omega^3}\int_{0}^{\omega B_0} \sinh^2(\xi)\,d\xi,\\
\int_{0}^b |G(x)|^2\,dx &=& \frac{\sinh^2(\omega B_0)}{\omega^3}\int_{0}^b \sinh^2(\xi)\,d\xi.\\
\end{eqnarray*}
We set $F(X):= \frac{\int_0^X \sinh^2(\xi)\,d\xi}{\sinh^2(X)}$ so that finally, we obtain :

\[
 \int_{-B_0}^0 |G(x)|^2\,dx\,=\, \frac{F(\omega B_0)}{F(\omega b)}\cdot \int_{0}^b |G(x)|^2\,dx. 
\]
It is straightforward that $F(X)$ is positive, tends to $1$ at infinity and that $F(X)/X$ tends to $\frac{1}{3}$ 
at $0.$ As a consequence, there exists some $C(Z_0)$ such that, for any $z<-Z_0^2$,
\[
 \int_{-B_0}^0 |G(x)|^2\,dx\leq C \max(1, (\omega b)^{-1}) \int_{0}^b |G(x)|^2\,dx, 
\]
For $b<B_1$ and $\omega >Z_0$, we have $\max(1, (\omega b)^{-1})\leq \max(1, \omega^{-1})b^{-1}\leq C b^{-1}$ 
which gives the claim for this range of parameters.
    
We now assume that we have $-Z_0^2 < z < \beta^2$. We  have 
\begin{eqnarray*}
\int_{-B_0}^0 |G(x)|^2\,dx&=& \left|\frac{\sin(\sqrt{z}b)}{\sqrt{z}}\right|^2 
\int_{-B_0}^0 \left|\frac{\sin(\sqrt{z}(x+B_0))}{\sqrt{z}}\right|^2\,dx\\ 
&=& b^2\left|\frac{\sin(\sqrt{z}b)}{\sqrt{z}b}\right|^2 
\int_{-B_0}^0 \left|\frac{\sin(\sqrt{z}(x+B_0))}{\sqrt{z}(x+B_0)}\right|^2(x+B_0)^2\,dx\\
&\leq & C b^2,
\end{eqnarray*}
where the constant $C$ comes from the fact that the function $\frac{\sin(w)}{w}$ 
is continuous and its argument belongs to a fixed compact set.
On the other hand, by a simple change or variables we have 
\begin{eqnarray*}
\int_{0}^b |G(x)|^2\,dx&=& \left|\frac{\sin(\sqrt{z}B_0)}{\sqrt{z}}\right|^2 
\int_{0}^b \left|\frac{\sin(\sqrt{z}x)}{\sqrt{z}}\right|^2\,dx \\
&\geq & c B_0^2 \frac{b^3}{3},
\end{eqnarray*}
in which $c$ is given by
\[
c\,=\,\left|\frac{\sin(\sqrt{z}B_0)}{\sqrt{z}B_0}\right|^2
\inf_{0\leq x\leq B_1}  \left|\frac{\sin(\sqrt{z}x)}{\sqrt{z}x}\right|^2.
\]
Using that $\frac{\sin(w)}{w}$ is continuous and does not vanish 
on $(-\infty, \pi)$ and choosing $B_1$ accordingly we obtain 
the first bound.\\
\hfill \\
For case $(2)$, we first use homogeneity and prove the bound for 
$ \tilde{G}\,:= z G.$  
We have
\begin{eqnarray*}
\int_{-B_0}^0 \left| \tilde{G}(x)\right|^2\,dx &=& \left|\sin(\sqrt{z}b)\right|^2 
\int_{-B_0}^0 \left|\sin(\sqrt{z}(x+B_0))\right|^2\,dx\\  
&\leq& B_0 \left|\sin(X)\right|^2,  
\end{eqnarray*}
in which we have set $X\,:=\sqrt{z}b.$
On the other hand we have 
\begin{eqnarray*}
\int_{0}^b \left| \tilde{G}(x)\right|^2\,dx &=& \left|\sin(\sqrt{z}B_0)\right|^2  
\int_{0}^b \left|\sin(\sqrt{z}x)\right|^2\,dx\\  
&=& b\cdot \left|\sin(\sqrt{z}B_0)\right|^2 \cdot \frac{1}{X}\int_{0}^X |\sin(\xi)|^2d\xi
\end{eqnarray*}
with the same $X.$
Under the assumptions, $X$ belongs to a compact subinterval of  $[0,\pi).$ 
Since on this interval the function 
$X \mapsto \frac{1}{X|\sin(X)|^2}\int_{0}^X |\sin(\xi)|^2d\xi$ is continuous, the claim 
follows. 
\end{proof}

We can now prove the following

\begin{prop}\label{prop:controlvosc}
There exist $\beta$ and $B_1:=B_1(\beta)$, such that 
if $b\leq B_1$ and $v\in H^1_0(-B_0,b)$ satisfies 
\[
-v''-z v\,=\,h,
\]
with $h$ that vanishes on $(-B_0,0),$   
then the following estimates hold:
\begin{enumerate}
\item If $z \leq \beta^2,$ then
  \begin{equation}
    \label{eq:boundvzs}
    \| v\|_{L^2(-B_0,0)}\,\leq\, C_1\left[ b^{\und} \|h\|_{H^{-1}(-B_0,b)}\,+\,b^{-\und}  \|v\|_{L^2(0,b)}\right],
  \end{equation}
\item If $\beta^2 \leq z \leq \frac{1}{b^2},$ 
 \begin{equation}
    \label{eq:boundvzi}
    \| v\|_{L^2(-B_0,0)}\,\leq\, C_1\left[ \frac{b^{\und}}{|\sin(B_0\sqrt{z})|} \|h\|_{H^{-1}(-B_0,b)}\,
+\,\frac{b^{-\und}}{|\sin(B_0\sqrt{z})|} \|v\|_{L^2(0,b)}\right],
  \end{equation}
\item If $\frac{1}{b^2}\leq z$ then 
\begin{equation}
    \label{eq:boundvzl}
    \| v\|_{L^2(-B_0,0)}\,\leq\, C_3\left[ b^{\und} \|h\|_{H^{-1}(-B_0,b)}\,+\,b^{-\und}  \|v\|_{L^2(0,b)}\right].
  \end{equation}
\end{enumerate}
\end{prop}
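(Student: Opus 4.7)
The plan is to write $v = v_p + w$ where $v_p$ is a suitable particular solution of $-v_p'' - zv_p = h$ and $w$ is a homogeneous solution whose structure is fully pinned down by Lemma \ref{lem:homsol} (or its analogue). The $L^2$ norms on $(-B_0,0)$ and $(0,b)$ are then compared via Lemma \ref{lem:controlG} in cases (1) and (2), or via a direct amplitude calculation in case (3).

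For cases (1) and (2), where $z \leq 1/b^2$ sits safely inside the range $z \leq (1-\eps)\pi^2/b^2$ of Lemma \ref{lem:vpsmallz}, I would take $v_p \in H^1_0(0,b)$ from that lemma, so that $\|v_p\|_{L^2(0,b)} \leq Cb\|h\|_{H^{-1}(-B_0,b)}$. Extending $v_p$ by zero gives $\tilde v_p \in H^1_0(-B_0,b)$, and $w := v - \tilde v_p$ then lies in $H^1_0(-B_0,b)$ and satisfies $-w''-zw = 0$ in $\D'((-B_0,b)\setminus\{0\})$ (the only obstruction would be a Dirac mass at $0$ from the jump of $v_p'$, supported exactly at $\{0\}$). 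Lemma \ref{lem:homsol} forces $w = AG$ for some $A \in \RR$. Since $\tilde v_p$ vanishes on $(-B_0,0)$, one has $\|v\|_{L^2(-B_0,0)} = |A|\|G\|_{L^2(-B_0,0)}$, while the triangle inequality on $(0,b)$ yields $|A|\|G\|_{L^2(0,b)} \leq \|v\|_{L^2(0,b)} + \|v_p\|_{L^2(0,b)}$. Dividing by $\|G\|_{L^2(0,b)}$ and plugging in the ratio bounds from Lemma \ref{lem:controlG} (case (1) for \eqref{eq:boundvzs}, case (2) for \eqref{eq:boundvzi}) completes both cases.

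For case (3), the regime $z \geq 1/b^2$ may violate the hypothesis of Lemma \ref{lem:vpsmallz} and, more seriously, the ratio in Lemma \ref{lem:controlG} retains a $1/\sin(\sqrt{z}B_0)$ factor absent from \eqref{eq:boundvzl}. Instead, I would build $v_p$ via the Duhamel formula
\[
v_p(x) = \frac{1}{\sqrt{z}} \int_0^x \sin(\sqrt{z}(x-y))\,h(y)\,dy,\qquad x \in (0,b),
\]
interpreted through the duality between $H^{-1}$ and $H^1_0$ after one integration by parts. The crucial feature is $v_p(0) = v_p'(0) = 0$, so the zero extension $\tilde v_p$ is $C^1$ across $0$; consequently $w := v - \tilde v_p$ is a classical solution of $-w''-zw=0$ on all of $(-B_0,b)$ with $w(-B_0) = 0$, hence $w(x) = A\sin(\sqrt{z}(x+B_0))/\sqrt{z}$ on the whole interval. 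Reading off $A^2 = zv(0)^2 + v'(0^-)^2$ from the two Cauchy data and using $\|v\|_{L^2(-B_0,0)}^2 \leq B_0A^2/z$ reduces matters to bounding $v(0)^2 + v'(0)^2/z$ by data on $(0,b)$. Writing $v = \tilde v_p + v(0)\cos(\sqrt{z}\,\cdot) + (v'(0)/\sqrt{z})\sin(\sqrt{z}\,\cdot)$ on $(0,b)$, the near-orthogonality lower bound $\|v-\tilde v_p\|_{L^2(0,b)}^2 \geq cb(v(0)^2 + v'(0)^2/z)$ valid for $\sqrt{z}b \geq 1$, combined with the kernel estimate $\|v_p\|_{L^2(0,b)} \leq Cb\|h\|_{H^{-1}}$, gives \eqref{eq:boundvzl}.

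The main obstacle is case (3): the decomposition through $G$ that worked in cases (1)--(2) unavoidably produces the $1/\sin(\sqrt{z}B_0)$ loss, so one must switch to a different homogeneous correction, namely the pure sine extended smoothly across $x=0$. Making this work requires both a Duhamel-type $v_p$ with vanishing Cauchy data at $0$ (so no singular contribution at $0$) and an unconditional lower bound recovering $(v(0),v'(0))$ from $\|v\|_{L^2(0,b)}$ once $\sqrt{z}b \geq 1$. The borderline regime $\sqrt{z}b \sim 1$, where the sine--cosine orthogonality over $(0,b)$ is marginal, may require a short compactness argument or a careful tracking of the absolute constants.
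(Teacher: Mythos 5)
Your proposal is correct and is essentially the paper's own proof: cases (1)--(2) are verbatim the argument via Lemma~\ref{lem:vpsmallz}, Lemma~\ref{lem:homsol} and Lemma~\ref{lem:controlG}, and in case (3) the paper likewise takes the Duhamel particular solution --- which, after one integration by parts against the antiderivative $H$ of $h$ vanishing on $(-B_0,0)$, becomes $v_p(x)=\int_0^x\cos\bigl(\sqrt z\,(x-y)\bigr)H(y)\,dy$, a formulation that also settles your ``$C^1$ across $0$'' gluing (delicate for $h\in H^{-1}$, since $v_p'$ has no pointwise trace) because this $v_p$ solves the equation in $\Dcal'(-B_0,b)$ and vanishes identically on $(-B_0,0)$ --- together with the observation that $v-v_p$ is a multiple of $\sin\bigl(\sqrt z\,(x+B_0)\bigr)$, your Cauchy-data bookkeeping at $0$ being just another way of computing the same amplitude. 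Your concern about the borderline regime $\sqrt z\,b\sim 1$ is unnecessary: $\int_0^b\sin^2(\sqrt z\,x+\phi)\,dx\;\ge\;\tfrac b2\Bigl(1-\tfrac{|\sin(\sqrt z\,b)|}{\sqrt z\,b}\Bigr)\;\ge\;\tfrac b2\,(1-\sin 1)$ whenever $\sqrt z\,b\ge 1$, uniformly in the phase $\phi$, so the lower bound holds with an explicit constant and no compactness argument --- this is exactly the elementary estimate behind the paper's \eqref{eq:vvplm}.
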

\begin{proof}
  In the first two cases, we have $z\leq \frac{1}{b^2}< \frac{\pi^2}{b^2}.$ 
We may thus consider $v_p$ as given by lemma \ref{lem:vpsmallz} and define $\tilde{v}_p$ by 
extending $v_p$ by $0$ for negative $x.$ Observe that 
$w:=v-\tilde{v}_p$ is in $H^1_0(-B_0,b)$ and satisfies 
\begin{eqnarray*}
-w''-z w=0
\end{eqnarray*}
in  $\Dcal((-B_0,b)\setminus \{ 0\})$ so that $v-\tilde{v}_p=A G$ 
for some $A$ according to Lemma \ref{lem:homsol}. 
Using Lemma \ref{lem:controlG} we obtain in the first case 
\[
\| v-\tilde{v}_p  \|_{L^2(-B_0,0)}\leq C b^{-\und}\|v-\tilde{v}_p\|_{L^2(0,b)}.
\]
We use the triangle inequality on the right-hand side and the fact that 
$\tilde{v}_p$ is $0$ for negative $x$ and coincide with $v_p$ for positive $v.$
We obtain 
\[
\|v\|_{L^2(-b_0,0)}\leq C b^{-\und}\left( \|v\|_{L^2(0,b)}\,+\,\|v_p\|_{L^2(0,b)}\right ).
\] 
The claim then follows from the estimate on $\|v_p\|_{L^2(0,b)}$ in lemma \ref{lem:vpsmallz}.
We prove the second case by following the same argument, inserting the corresponding 
bound for $G.$

The third case will follow the same lines but we will introduce a different 
particular solution $v_p,$ following then even more closely the proof of 
\cite{BZ3}. We set $\lambda=\sqrt{z}.$
   
Denote by $H$ the unique $L^2$ function on $(-B_0,b)$ that vanishes on $(-B_0,0)$ and such that 
$H'=h$ in the distributional sense. 
The $L^2$ norm of $H$ is related to the $H^{-1}$ norm of $h$ by the relation 
\[
\| H-\left(\int_0^b H(y)dy\right) \|_{L^2(-B_0,b)}\,=\, \|h\|_{H^{-1}(-B_0,b)}.
\]
The Cauchy-Schwarz inequality then implies that 
\begin{equation}\label{eq:compHh}
\| H\|_{L^2(-B_0,b)}\geq (1+b^{\und})^{-1} \|h\|_{H^{-1}(-B_0,b)}.
\end{equation}

Set  
\[
v_p(x)\,=\, \int_{-B_0}^x \frac{\sin(\lambda(x-y))}{\lambda} H'(y)\,dy,
\]
then $v_p$ satisfies 
\begin{eqnarray*}
-v_p''-\lambda^2 v_p\, =\, H'
\end{eqnarray*} 
in $\Dcal'(-B_0,b)$ and $v_p(-B_0)=0$ but the boundary condition need not be satisfied at $b.$ We thus have 
\[
v(x)\,=\, v_p(x)-v_p(b)\frac{\sin(\lambda(x+B_0))}{\sin(\lambda(B_0+b))}.
\]
The function $v-v_p$ is thus a multiple of $ \sin(\lambda(x+B_0)).$

We have 
\begin{eqnarray*}
\int_{-B_0}^0 \left|\sin (\lambda(x+B_0))\right|^2 \,dx \leq  B_0 
\end{eqnarray*}
and
\begin{eqnarray*}
\int_{0}^b \left|\sin (\lambda(x+B_0))\right|^2 \, dx \geq  \und(b -\frac{1}{2\lambda}).
\end{eqnarray*}
Hence, in the regime under consideration we have 
\begin{equation}\label{eq:vvplm}
\|v-v_p\|_{L^2(-B_0,b)}\leq C b^{-\und} \| v-v_p\|_{L^2(0,b)}.
\end{equation} 

We perform an integration by parts in $v_p$ and observe that the 
boundary contributions vanish because $H$ vanishes near $-B_0$ and 
$\sin(\lambda(y-x))$ vanishes at $y=x$.

Finally, we obtain
\[
v_p(x)\,=\, \int_{-B_0}^x \cos(\lambda(x-y))H(y) \,dy .
\] 
It follows that $v_p$ is identically $0$ on $(-B_0,0)$ and that, 
on $(0,b),$ it satisfies 
\begin{eqnarray*}
|v_p(x)|\leq \|H\|_{L^2(-B_0,b)} \sqrt{x}.
\end{eqnarray*}
Squaring and integrating, we get
\[
\|v_p\|_{L^2(0,b)}\leq b \|H\|_{L^2(-B_0,b)}.
\]
Using the triangle inequality in \refeq{eq:vvplm} and inserting this bound, 
the result follows for $b\leq \und$ using \refeq{eq:compHh}. 
\end{proof}

In the paper, we will need to relax the condition that $v(b)=0.$ 
This can be done using a standard construction related to a commutator 
method. We will get the following

\begin{thm}
  \label{thm:controluosc}
There exist $\beta$ and four  constants $B_1, C_1,C_2,C_3$ depending only on $\beta$ such that 
the following holds. For any $b\leq B_1$, 
for any function $u$ in  $H^1(-B_0,b)$ that satisfies 
\[
-u''-z u\,=\,h,
\]
with $h\in H^{-1}(-B_0,b)$ and such that $u(-B_0)=0$ and $h$ vanishes on $(-B_0,0)$.  Then, 
the following estimates hold:
\begin{enumerate}
\item If $ z \leq \beta^2,$ then
  \begin{equation}
    \label{eq:boundvzs1}
    \| u\|_{L^2(-B_0,0)}\,\leq\, C_1\left[ b^{\und} \|h\|_{H^{-1}(-B_0,b)}\,+\,b^{-\und} \|u\|_{L^2(0,b)}\right].
  \end{equation}
\item If $\beta^2 \leq z \leq \frac{1}{b^2}$, then
 \begin{equation}
    \label{eq:boundvzi1}
    \| u\|_{L^2(-B_0,0)}\,\leq\, C_1\left[ \frac{b^{\und}}{|\sin(B_0\sqrt{z})|} \|h\|_{H^{-1}(-B_0,b)}\,
+\,\frac{b^{-\und}}{|\sin(B_0\sqrt{z})|} \|u\|_{L^2(0,b)}\right].
  \end{equation}
\item If $\frac{1}{b^2}\leq z$, then 
\begin{equation}
    \label{eq:boundvzl1}
    \| u\|_{L^2(-B_0,0)}\,\leq\, C_3\left[ b^{\und} \|h\|_{H^{-1}(-B_0,b)}\,+\,b^{-\und} \|u\|_{L^2(0,b)}\right].
  \end{equation}
\end{enumerate}
\end{thm}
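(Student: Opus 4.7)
The plan is to reduce \thmref{thm:controluosc} to \propref{prop:controlvosc} by a cutoff argument. I choose a smooth cutoff $\chi\in C^\infty([-B_0,b])$ equal to $1$ on $[-B_0,b/2]$ with $\chi(b)=0$, $\supp(\chi^{(j)})\subset [b/2,b]$ and $|\chi^{(j)}|\leq C_j b^{-j}$ for $j=1,2$, and set $v:=\chi u$. Since $u(-B_0)=0$ and $\chi(b)=0$, $v$ lies in $H^1_0(-B_0,b)$, coincides with $u$ on $[-B_0,0]$, and satisfies $\|v\|_{L^2(0,b)}\leq \|u\|_{L^2(0,b)}$. The commutator identity $[-\p_x^2,\chi]=-2\chi'\p_x-\chi''$ yields $-v''-zv=\tilde h$ with
\[
\tilde h\,:=\,\chi h-2\chi'u'-\chi''u,
\]
which still vanishes on $(-B_0,0)$ because $h$ does there and $\chi',\chi''$ are supported in $[b/2,b]$. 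Thus \propref{prop:controlvosc} applies to $(v,\tilde h)$ in each of the three regimes of $z$, provided we control $\|\tilde h\|_{H^{-1}}$.

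The target estimate is $\|\tilde h\|_{H^{-1}(-B_0,b)}\leq C\bigl(\|h\|_{H^{-1}}+b^{-1}\|u\|_{L^2(0,b)}\bigr)$, with $C$ independent of $b$ and $z$. Rewriting the commutator as $-2\chi'u'-\chi''u=-\p_x(2\chi'u)+\chi''u$ puts one derivative on the $\chi'u$ term: $\chi'u\in L^2$ with $\|\chi'u\|_{L^2}\leq Cb^{-1}\|u\|_{L^2(0,b)}$, hence $\|\p_x(2\chi'u)\|_{H^{-1}}\leq Cb^{-1}\|u\|_{L^2(0,b)}$. The multiplicative piece $\chi h$ can be handled by the standard identity $\langle \chi h,\phi\rangle=\langle h,\chi\phi\rangle$, once we establish that multiplication by $\chi$ is bounded on $H^1_0(-B_0,b)$ uniformly in $b$. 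The genuinely delicate term is $\chi''u$: the naive $L^2\hookrightarrow H^{-1}$ embedding produces $\|\chi''u\|_{H^{-1}}\leq Cb^{-2}\|u\|_{L^2(0,b)}$, one power of $b$ too large. This is the main obstacle.

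The resolution rests on the observation that any test function $\phi\in \Dcal_b$ vanishes at $b$, so $|\phi(x)|\leq \sqrt{b-x}\,\|\phi'\|_{L^2}$, whence
\[
\|\phi\|_{L^2(b/2,b)}^2\,\leq\,\int_{b/2}^b(b-x)\,dx\,\|\phi'\|_{L^2}^2\,=\,\frac{b^2}{8}\|\phi'\|_{L^2}^2.
\]
Cauchy-Schwarz together with $\|\chi''\|_{L^\infty}\leq Cb^{-2}$ and $\supp\chi''\subset[b/2,b]$ then gives $\|\chi''u\|_{H^{-1}}\leq Cb^{-1}\|u\|_{L^2(0,b)}$: the $\sqrt{b}$ gained from the Dirichlet vanishing of $\phi$ at $b$ exactly compensates the $O(b^{-2})$ singularity of $\chi''$. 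The same Dirichlet-vanishing trick bounds $\|\chi'\phi\|_{L^2}\leq C\|\phi'\|_{L^2}$ uniformly in $b$, which in turn yields $\|\chi\phi\|_{H^1}\leq C\|\phi\|_{H^1}$ and hence the promised uniform boundedness of multiplication by $\chi$ on $H^{-1}$.

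Inserting the bound $\|\tilde h\|_{H^{-1}}\leq C(\|h\|_{H^{-1}}+b^{-1}\|u\|_{L^2(0,b)})$ into \propref{prop:controlvosc} applied to $(v,\tilde h)$ produces
\[
\|u\|_{L^2(-B_0,0)}\,=\,\|v\|_{L^2(-B_0,0)}\,\leq\,C_*(z)\bigl[b^{\und}\|\tilde h\|_{H^{-1}}+b^{-\und}\|v\|_{L^2(0,b)}\bigr],
\]
where $C_*(z)$ is a fixed constant in cases (1) and (3) and a constant multiple of $|\sin(B_0\sqrt z)|^{-1}$ in case (2). The commutator contribution is of size $b^{\und}\cdot b^{-1}\|u\|_{L^2(0,b)}=b^{-\und}\|u\|_{L^2(0,b)}$, which combines with $b^{-\und}\|v\|_{L^2(0,b)}\leq b^{-\und}\|u\|_{L^2(0,b)}$ to yield \refeq{eq:boundvzs1}, \refeq{eq:boundvzi1} and \refeq{eq:boundvzl1} in the three regimes respectively.
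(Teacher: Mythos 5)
Your proof is correct and follows essentially the same route as the paper: cut off $u$ near $x=b$, estimate the commutator terms in $H^{-1}(-B_0,b)$ with the crucial gain of one power of $b$ (your use of $|\phi(x)|\leq\sqrt{b-x}\,\|\phi'\|_{L^2}$ for test functions is the same integration-by-parts mechanism the paper implements through the antiderivative of $\rho_b''u$), and then invoke Proposition \ref{prop:controlvosc} in each regime of $z$. If anything, your handling of the $\chi h$ term is slightly more careful than the paper's, which writes $h$ in place of $\rho_b h$ and leaves implicit the uniform (in $b$) boundedness of multiplication by the cutoff on $H^{-1}$.
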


\begin{proof}
Define a smooth cutoff function $\rho_1$ such that $\rho_1(x)$ is identically 
$1$ if $x\leq \und$ and identically $0$ if $x\geq 1$ and let $\rho_b$ be the 
function $x\mapsto \rho_1(\frac{x}{b}).$ 
Define $v := \rho_b u$ then $v\in H^1_0(-B_0,b)$ and satisfies 
\[
-v''-z v\,=\, h\,+\,2(\rho_b'u)'-\rho_b''u.
\]
The right-hand side vanishes on $(-B_0,0)$ so that, in order to use proposition \ref{prop:controlvosc}, 
we have to estimate its $H^{-1}$ norm. The strategy is the same as in the proofs of lemmas 
\ref{lem:normhklm} and \ref{lem:estnormhsm}. 

An integration by parts followed by the use of the Cauchy-Schwarz inequality gives 
\begin{eqnarray*}
\left| \int_{-B_0}^{B_1} (\rho_b'u)'\phi \right| &\leq&  \| \rho' u\|_{L^2(0,b)}\| \phi'\|_{L^2}\\
&\leq & \frac{C}{b}\| u\|_{L^2(0,b)} \|\phi'\|_{L^2}.
\end{eqnarray*}
Thus,
\[
\| (\rho_b' u)'\|_{H^{-1}}\leq \frac{C}{b}\| u\|_{L^2(0,b)}.
\]
The third term can be estimated using the same method.  Indeed,
\begin{eqnarray*}
\left| \int \rho_b'' u \phi\right | & =&\left| \int_0^b \left(\int_0^x \rho_b''(y)u(y)dy\right)\phi'(x)\,dx\right|\\
&\leq & \| \int_0^x \rho_b''(y)u(y)dy\|_{L^2(0,b)}\| \phi'\|_{L^2}.
\end{eqnarray*}
Using again Cauchy-Schwarz inequality and the fact that $|\rho_b''(y) |\leq Cb^{-2}$ we get 
\[
\left|  \int_0^x \rho_b''(y)u(y)dy\right |\leq Cb^{-2}\|u\|_{L^2(0,b)}\sqrt{x}.
\]

We obtain 
\begin{eqnarray*}
\| \int_0^x \rho_b''(y)u(y)dy\|_{L^2(0,b)}&\leq& Cb^{-2}\|u\|_{L^2(0,b)}\| \sqrt{x}\|_{L^2(0,b)}\\
&\leq & Cb^{-1}\|u\|_{L^2(0,b)}.
\end{eqnarray*}
It follows that 
\[
\| h\,+\,2(\rho_b'u)'-\rho_b''u\|_{H^{-1}(-B_0,b)}\leq \|h\|_{H^{-1}(-B_0,b)}\,+\,C b^{-1}\|u\|_{L^2(0,b)}.
\]
We obtain the theorem by plugging this bound into the estimates of the proposition \ref{prop:controlvosc}. 
\end{proof}

\end{appendix}

\end{document}